\renewcommand{\L}{\mathcal{L}}
\newcommand{\N}{\mathcal{N}}
\newcommand{\F}{\mathcal{F}}
\newcommand{\C}{\mathbb{C}}
\newcommand{\g}{\mathfrak{g}}
\newcommand{\n}{\mathfrak{n}}
\newcommand{\h}{\mathfrak{h}}
\newcommand{\f}{\mathfrak{f}}
\newcommand{\ad}{\operatorname{ad}}
\newcommand {\Der} {\operatorname{Der}}
\newtheorem{theorem}{Theorem}[section]
\newtheorem{lemma}[theorem]{Lemma}
\newtheorem{proposition}[theorem]{Proposition}
\theoremstyle{definition}
\theoremstyle{remark}
\newtheorem{remark}[theorem]{Remark}
\numberwithin{equation}{section}
\begin{document}

\title[No rigid filiform Lie algebras]{There are no rigid filiform Lie algebras \\ of low dimension}
\author{Paulo Tirao and Sonia Vera}
\address{CIEM-FaMAF, Universidad Nacional de Córdoba, Argentina}
\date{August, 2017}
\subjclass[2010]{Primary 17B30; Secondary 17B99}
\keywords{Filiform Lie algebras, deformations, Vergne's conjecture.}

\maketitle

\begin{abstract}
We prove that there are no rigid complex filiform Lie algebras in the variety of (filiform) Lie algebras 
of dimension less than or equal to 11.
More precisely we show that in any Euclidean neighborhood of a filiform Lie bracket (of low dimension), 
there is a non-isomorphic filiform Lie bracket.
This follows by constructing non trivial linear deformations in a Zariski open dense set of the variety of filiform Lie
algebras of dimension 9, 10 and 11. (In lower dimensions this is well known.) 
\end{abstract}

\section{Introduction}

We are interest on the existence problem of rigid nilpotent Lie algebras of finite dimension,
mainly over the complex numbers.
In this case we consider a Lie algebra as rigid if its isomorphism class is open in the euclidean topology.
In general a Lie algebra is rigid if its isomorphism class is open in the Zariski topology.
The expected answer is no, there are no rigid nilpotent Lie algebras, and it is usually referred as Vergne's conjecture.

In this paper we address this problem for filiform Lie algebras.
Our approach and the general setting applies for filiform algebras of arbitrary dimension.
However, at the moment, in order to obtain complete results we use the complete decomposition
in irreducible components of the varieties of filiform Lie algebras, something available only for small dimensions.

We prove that filiform Lie algebras of low dimension are never rigid,
not only in the variety of all Lie algebras, but also in the (sub)variety of filiform Lie algebras.

The variety $\L$ of complex Lie algebras of dimension $n$
is the algebraic set of alternating bilinear maps $\mu:\C^n\times\C^n\rightarrow\C^n$
that satisfy the Jacobi identity.
The orbit of $\mu\in\L$ under the natural action of $GL_n$, $O(\mu)$, is the isomorphism class of $\mu$.
We say that $\mu$ is rigid if $O(\mu)$ is open in the euclidean topology.

Inside $\L$ there are various interesting subvarieties to consider, for example the variety
$\N$ of nilpotent Lie algebras.
It is expected that in any neighborhood of a nilpotent Lie algebra $\mu$,
there are non isomorphic
Lie algebras.
This is the case, for instance, if the nilpotent Lie algebra $\mu$ has a semisimple derivation
\cite{C,GH,GT}.

Inside $\N$, the subvarieties $\N_k$ of nilpotent Lie algebras of nilpotency class less than or equal to $k$
are one inside the other $\N_k\subseteq\N_{k+1}$. 
The class of filiform Lie algebras $\F$, of algebras of maximum nilpotency class equal to $n-1$, is the Zariski open
complement of $\N_{n-2}$, and hence is a subvariety which is Zariski dense 
in all irreducible components of $\N$ that intersect $\F$.
Hence $\F$ is dense also in the Euclidean topology in all these irreducible components, and therefore
if the filiform algebras are non rigid it follows that all algebras in these irreducible components are
non rigid too.

We propose a general construction of linear deformations for nilpotent Lie algebras, 
that carries out nicely for filiform Lie algebras of arbitrary dimension.
In order to show that these deformations are non trivial, the usual Lie algebra invariants are not sufficient.
Instead we show directly that an isomorphism between the deformed algebra and the original can not exist.
This requires to study carefully the isomorphism equations that arise with respect to \emph{adapted basis}
of the filiform algebras involved.

The description of all filiform Lie algebras using adapted basis goes back to Michelle Vergne \cite{V}.
For small dimensions this makes it possible to write down an explicit parametrization with not many parameters \cite{GJK2}.
The structure of the varieties of filiform Lie algebras of dimension $n=9,10,11$ is relevant for us,
in particular their decomposition into irreducible components \cite{K,GJK1}.

It is worth mentioning that for smaller dimensions, $n=3\dots 8$ complex filiform Lie algebras are classified, and it is already known that
there are no rigid filiform Lie algebras. 
However, in dimensions $\le 6$ there are only a finite number of (isomorphism classes of) them, and
there are algebras which are rigid inside the variety of filiform Lie algebras.

\section{Linear deformations of Lie algebras}

Given bilinear maps $\psi$ and $\phi$ let $\psi\circ\phi$ be the trilinear map defined by
\begin{eqnarray*}
 \psi\circ\phi(X,Y,Z) &=& \circlearrowleft \psi(\phi(X,Y),Z) \\
                      &=& \psi(\phi(X,Y),Z)+\psi(\phi(Y,Z),X)+\psi(\phi(Z,X),Y).
\end{eqnarray*}
The Jacobi identity for $\mu$ is $\mu\circ\mu=0$.
Hence an alternating bilinear map $\mu$ is a Lie algebra if and only if $\mu\circ\mu=0$.
In addition, a bilinear map $\phi$ is a 2-cocycle for $\mu$ if and only if $\mu\circ\phi+\phi\circ\mu=0$.

For the purpose of this paper, given a Lie algebra $\mu$, we shall consider
linear deformations of $\mu$, that is
families of Lie algebras $\mu_t=\mu+t\varphi$, where $t$ is a complex parameter.
It is straightforward to verify that $\mu_t$ is a Lie algebra for all $t$ if and only if $\varphi$ is a Lie algebra and a 2-cocycle for $\mu$.

If for arbitrary small $\epsilon$, there is a $t\in\C$ with $|t|<\epsilon$ such that $\mu_t$ is not isomorphic to $\mu$, then $\mu$ is not rigid.
In case $\mu_t$ is a curve of nilpotent Lie algebras (or filiform), 
then $\mu$ is not rigid in the variety of nilpotent (or filiform) Lie algebras.

In this section we propose a method for the construction of linear deformations of Lie algebras
that we apply to filiform Lie algebras in Section \ref{sec:deformations}.

In order to refer to the commutator or an ideal of a given Lie algebra $\mu$,
we need to mention explicitly the underlying vector space of $\mu$.

\begin{proposition}\label{prop:deformation}
Let $\mu$ be a nilpotent Lie algebra defined on $\n=\C^n$ and let $\h$ be an ideal of $\n$ of codimension 2 containing the commutator $[\n,\n]$.
Let $\langle x_0,x_1\rangle$ be a direct complement to $\h$, so that $\n=\langle x_0,x_1\rangle \oplus \h$.

If $D\in\Der(\h)$ is such that $D\ad(x_0)=\ad(x_0) D$ in $\h$, then the alternating bilinear map $\varphi_D$ defined by
\[ 
 \varphi_D(x_0,x_1)=0, \quad \varphi_D(x_0,h)=0, \quad \varphi_D(x_1,h)=D(h), \quad \varphi_D(h,h')=0,
\]
for $h,h'\in\h$,
is a Lie algebra and a 2-cocycle for $\mu$.
Therefore $\mu_t=\mu+t\varphi_D$ is a linear deformation of $\mu$.
\end{proposition}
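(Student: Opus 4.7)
The plan is to verify the two required conditions in order: first that $\varphi_D$ satisfies the Jacobi identity (so that $\varphi_D$ is itself a Lie bracket), and second that it is a 2-cocycle for $\mu$. Both reductions proceed by choosing a basis adapted to the decomposition $\n = \langle x_0,x_1\rangle \oplus \h$ and evaluating the trilinear expressions $\varphi_D\circ\varphi_D$ and $\mu\circ\varphi_D + \varphi_D\circ\mu$ on triples of basis vectors. Since both are alternating, it suffices to check triples with three distinct entries from $\{x_0,x_1\}$ together with a basis of $\h$, which gives four types: $(x_0,x_1,h)$, $(x_0,h,h')$, $(x_1,h,h')$ and $(h,h',h'')$.

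For Jacobi, the key observation is that $\varphi_D$ is nonzero only on pairs of the form $(x_1,h)$, with value $D(h)\in\h$, and vanishes on any pair that lies in $\h\times\h$ or contains $x_0$. Hence in the cyclic sum $\circlearrowleft \varphi_D(\varphi_D(X,Y),Z)$, whenever the inner bracket is nonzero it already belongs to $\h$, and the outer bracket then pairs an element of $\h$ with either $x_0$, $x_1$ or another element of $\h$; the first and third cases vanish trivially, and the case $Z=x_1$ does not occur in the triples above without repetition. A type-by-type check then confirms $\varphi_D\circ\varphi_D=0$.

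For the cocycle condition the two hypotheses on $D$ are used exactly once each. The triple $(x_0,x_1,h)$, after using that $[\n,\n]\subseteq\h$ and that $\varphi_D$ vanishes on $\h\times\h$ and on pairs containing $x_0$, collapses to
\[
-\ad(x_0)\bigl(D(h)\bigr) + D\bigl(\ad(x_0)(h)\bigr),
\]
which is zero by the assumed commutation on $\h$. The triple $(x_1,h,h')$ reduces, after the same sort of cancellations and using that $\mu$ is alternating, to
\[
\mu(D(h),h') + \mu(h,D(h')) - D\bigl(\mu(h,h')\bigr),
\]
which vanishes because $D\in\Der(\h)$. The remaining triples $(x_0,h,h')$ and $(h,h',h'')$ produce only terms in which a $\varphi_D$-bracket has an argument equal to $x_0$ or has both arguments in $\h$, so all contributions are zero; here again the hypothesis $[\n,\n]\subseteq\h$ guarantees that every $\mu$-bracket feeding into a $\varphi_D$ lies in $\h$. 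There is no conceptual obstacle; the only thing to be careful about is the bookkeeping that isolates precisely where the derivation property and the commutation with $\ad(x_0)$ enter, which is what forces the choice of $\h$ to contain $[\n,\n]$.
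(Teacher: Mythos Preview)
Your proof is correct and follows essentially the same case-by-case verification as the paper: check $\varphi_D\circ\varphi_D=0$ and then $\mu\circ\varphi_D+\varphi_D\circ\mu=0$ on the four types of triples, with the commutation $D\,\ad(x_0)=\ad(x_0)\,D$ handling $(x_0,x_1,h)$ and the derivation property handling $(x_1,h,h')$. One small point where you are actually cleaner than the paper: you dispose of $\varphi_D(\mu(x_0,x_1),h)$ directly via the hypothesis $[\n,\n]\subseteq\h$, whereas the paper invokes nilpotency of $\mu$ to get $\mu(x_0,x_1)\in\h\oplus\langle x_0\rangle$; your observation shows that the nilpotency assumption is in fact not needed once $\h\supseteq[\n,\n]$ is assumed.
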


\begin{proof}
We first check that $\circlearrowleft \varphi_D(\varphi_D(X, Y), Z)=0$ for $X=x_0$, $Y=x_1$ and $Z=h\in\h$;
we have that
\[
\circlearrowleft \varphi_D(\varphi_D(x_0, x_1), h)= 
  \varphi_D (0, h) + \varphi_D(D(h), x_0) + \varphi_D(0, x_1) = 0.
\]
All other cases are clear since $\varphi_D(h_1,h_2)=0$ for $h_1,h_2\in\h$.


To show that $\mu\circ\varphi_D+\varphi_D\circ\mu=0$ we check that
$\circlearrowleft \mu(\varphi_D(X, Y), Z) + \circlearrowleft \varphi_D(\mu (X, Y), Z)=0$
in three cases: for $X=x_0$, $Y=x_1$ and $Z\in\h$; for $X=x_0$ and $Y,Z\in\h$; or for $X=x_1$ and $Y,Z\in\h$.
There is no need to check the case in which $X,Y,Z\in\h$, since it is clear.

In the first case we have that:
\begin{eqnarray*}
\circlearrowleft \mu(\varphi_D(x_0, x_1), h) + \circlearrowleft \varphi_D(\mu (x_0, x_1), h) \\
  &&\kern-5cm = \mu(0, h) + \mu(D (h), x_0) + \mu(0, x_1) + 0 + 0 - \varphi_D(x_1, \mu (h, x_0)) \\
  &&\kern-5cm = -\mu(x_0, D(h)) + D(\mu (x_0, h)) \\
  &&\kern-5cm =  0,
\end{eqnarray*}
since $D\ad(x_0)=\ad(x_0) D$.
Notice that $\varphi_D(\mu (x_0, x_1), h)=0$ because $\mu(x_0,x_1)\in\h\oplus\langle x_0 \rangle$,
otherwise $\mu$ would not be nilpotent.

In the second case we have that:
\begin{eqnarray*}
\circlearrowleft \mu(\varphi_D(x_0, h), h') + \circlearrowleft \varphi_D(\mu (x_0, h), h') \\
  &&\kern-1.5cm = \mu(0, h') + \mu(0, x_0) + \mu(0, h) + 0 + 0 + 0 \\
  &&\kern-1.5cm =  0.
\end{eqnarray*}

In the third case we have that:
\begin{eqnarray*}
\circlearrowleft \mu(\varphi_D(x_1, h), h') + \circlearrowleft \varphi_D(\mu (x_1, h), h') \\
  &&\kern-5cm = \mu(D(h), h') + \mu(0, x_1) - \mu(D(h'), h) + 0 - D(\mu(h,h'))  + 0 \\
  &&\kern-5cm =  0,
\end{eqnarray*}
since $D$ is a derivation of $\h$.
\end{proof}

\begin{remark}
 The construction above applies not only to nilpotent Lie algebras. 
 In fact, the nilpotency hypothesis was used in the proof only to ensure
 that $\mu(x_0,x_1)\in\h\oplus\langle x_0 \rangle$.
 
 For instance, the construction applies to the following easy example.
 Let $\g=\langle x_0,x_1 \rangle \oplus \C^{n-2}$, with bracket $\mu$, be the direct sum of the 
 2-dimensional solvable Lie algebra given by $\mu(x_0,x_1)=x_0$
 and the abelian Lie algebra of dimension $n-2$.
 Choose $\h=\C^{n-2}$. Since $\mu(x_0,x_1)=x_0$, the same proof carries over.
 Therefore we may choose $D$ a derivation of $\h$, that is, an endomorphisms of $\h$.
 Since $D$ commutes with $\ad_{x_0}=0$ in $\h$, we may choose any $D$.
 Hence, the corresponding $\varphi_D$ gives rise to the linear deformation
 $\mu_t=\mu+t\varphi_D$ of $\mu$ given by:
 \[ \mu_t(x_0,x_1)=x_0,\quad \mu_t(x_0,h)=0,\quad \mu_t(x_1,h)=tD(h),\quad \mu_t(h,h')=0. \]
\end{remark}

\begin{remark}[The Grunewald-O'Halloran method]\label{rmk:gh}
Our construction was inspired by a previous one by Grunewald and O'Halloran \cite{GH}.
Given a Lie algebra $\g$ with bracket $\mu$, an ideal $\h$ of codimension 1, a derivation $D$ of $\h$
and $x\not\in \h$,
the alternating bilinear map $\varphi_D$ defined by
\[ 
 \varphi_D(x,h)=D(h), \quad \varphi_D(h,h')=0,
\]
for $h,h'\in\h$,
is a Lie algebra and a 2-cocycle for $\mu$.
Therefore $\mu_t=\mu+t\varphi_D$ is a linear deformation of $\mu$.

As a relevant application they showed that if $\g$ is a nilpotent Lie algebra and $D$ is a semisimple 
(or has a non-zero eigenvalue) derivation of a given ideal $\h$ of codimension 1,
then the corresponding linear deformation $\mu_t$ is non-trivial (solvable non-nilpotent).

As a consequence, since every nilpotent Lie algebra of dimension $\le 6$ admits a semisimple derivation,
any nilpotent Lie algebra of dimension $\le 7$ has a solvable non-nilpotent deformation.
In particular there are no rigid nilpotent Lie algebras of dimension $\le 7$.
\end{remark}

\section{Filiform Lie algebras}

The following description of all filiform Lie algebras may be found in \cite{GJK2} and goes back to \cite{V}.
Given a filiform Lie algebra $\mu$ of dimension $n$, there is an {\sl adapted basis} $\{x_0,x_1,\dots,x_{n-1}\}$ 
of $\C^{n}$ such that
\[ \mu=\mu_0+\psi, \]
where $\mu_0$ is the standard filiform Lie algebra given by:
\[ \mu_0(x_i,x_j)= \begin{cases} x_{j+1}, &\text{if $i=0$,} \\
                               0,  &\text{if $i\ne 0$,}
                 \end{cases}
\]
and $\psi$ is a 2-cocycle for $\mu$ that may be written as 
\begin{equation}\label{eqn:psi}
\psi = \sum_{(r,s)\in\Delta_{n}} a_{r,s} \psi_{r,s}
\end{equation}
for some particular cocycles $\psi_{r,s}$ and $a_{r,s}\in\C$. 
The cocycle $\psi_{r,s}$ is given by:
\[
\psi_{r,s} (x_i, x_j) = -\psi_{r,s} (x_j, x_i) = (-1)^{r-i} \binom{j - r - 1}{r - i} x_{i+j+s-2r-1},
\]
for $1\le i\le r<j\le n-1$ and $i+j+s-2r-1\le n-1$, and $\psi_{r,s}(x_i,x_j)=0$ otherwise.
The index set $\Delta_n$ is, depending on whether $n$ is odd or even respectively, as follows:
\begin{eqnarray*}
 \Delta_{n} &=& \big\{ (r,s):\, 1\le r\le n-2,\ 2r+1<s\le n-1\big\} \\ 
 \Delta_{n} &=& \big\{ (r,s):\, 1\le r\le n-2,\ 2r+1<s\le n-1\big\} \cup \big\{(\tfrac{n-2}{2},n-1) \big\}. 
\end{eqnarray*}

We collect some facts, that follow directly from this description of $\mu$, for later use.

\begin{lemma}\label{lemma:mu}
 Given $\mu$ of dimension $n$ and $\{x_0,\dots,x_{n-1}\}$ an adapted basis for $\mu$ 
 it holds that:
 \begin{enumerate}[(1)]
  \item $\mu(x_0,x_i)=x_{i+1}$, for $1\le i\le n-2$.
  
  \item $\mu(x_i,x_j)=0$, for $\frac{n-2}{2}<i<j$.
  
  \item If $n$ is odd, $\mu(x_i,x_j)=\sum_{k\ge i+j+1} c_{i,j}^k x_k$, for $i<j$.
  
  \item If $n$ is even, $\mu(x_i,x_j)=\sum_{k\ge i+j} d_{i,j}^k x_k$, for $i<j$,
    where $d_{i,j}^{i+j}$ might be different from 0 only for $1\le i\le \frac{n-2}{2}<j\le n-1$.
    
  \item The central descending series of $\mu$ is given by: 
\begin{eqnarray*} 
C_0 &=& \langle x_0,x_1,x_2,\dots,x_{i+1},\dots,x_{n-1} \rangle \\
C_1 &=& \langle x_2,\dots,x_{i+1},\dots,x_{n-1} \rangle \\
    &\vdots& \\
C_i &=& \langle x_{i+1},\dots,x_{n-1} \rangle \\
    &\vdots& \\
C_{n-2} &=& \langle x_{n-1} \rangle 
\end{eqnarray*}
 
 \end{enumerate}
\end{lemma}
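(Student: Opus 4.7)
The plan is to exploit the decomposition $\mu = \mu_0 + \psi$ with $\psi = \sum_{(r,s)\in\Delta_n} a_{r,s}\psi_{r,s}$ and to check each assertion directly against the explicit formula for $\psi_{r,s}(x_i,x_j)$. Almost everything will reduce to a careful reading of the index constraints $1\le i\le r<j\le n-1$, $i+j+s-2r-1\le n-1$ and $2r+1<s\le n-1$ that appear in the definitions of $\Delta_n$ and $\psi_{r,s}$.

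For (1), I would note that $\mu_0(x_0,x_i)=x_{i+1}$ for $1\le i\le n-2$, while $\psi_{r,s}(x_0,x_j)=0$ for every $(r,s)\in\Delta_n$ because the nonzero range of $\psi_{r,s}$ requires $i\ge 1$. For (2), the standard part $\mu_0(x_i,x_j)$ vanishes when $i\ne 0$, and a cocycle $\psi_{r,s}$ can be nonzero on $(x_i,x_j)$ only if $i\le r$; so the hypothesis $i>(n-2)/2$ forces $2r+1>n-1$, which is incompatible with $2r+1<s\le n-1$. The exceptional index $(r,s)=((n-2)/2,n-1)$ coming from the even case is ruled out by the same requirement $i\le r=(n-2)/2$.

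For (3) and (4), the key observation is that $\psi_{r,s}(x_i,x_j)$ is a scalar multiple of $x_{i+j+s-2r-1}$. When $s>2r+1$, the subscript is $\ge i+j+1$; combined with $\mu_0(x_0,x_j)=x_{j+1}=x_{0+j+1}$, this gives (3) for odd $n$ (and also for even $n$ away from the exceptional cocycle). For even $n$ the additional cocycle $\psi_{(n-2)/2,n-1}$ satisfies $s-2r-1=0$, so it contributes exactly at index $i+j$; its nonvanishing range $1\le i\le (n-2)/2<j\le n-1$ is precisely the range in which $d_{i,j}^{i+j}$ is allowed to be nonzero, giving (4). The case $i=0$ is handled by $\mu(x_0,x_j)=x_{j+1}$, so $d_{0,j}^{j}=0$ as required.

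For (5), I would argue by induction on $i$, using $C_i=[\n,C_{i-1}]$ and $C_0=\n$. Assuming $C_{i-1}=\langle x_i,\dots,x_{n-1}\rangle$, part (1) yields the chain $x_{i+1}=\mu(x_0,x_i)$, $x_{i+2}=\mu(x_0,x_{i+1})$, \dots, $x_{n-1}=\mu(x_0,x_{n-2})$, so $\langle x_{i+1},\dots,x_{n-1}\rangle\subseteq C_i$. The reverse inclusion is then immediate from (3) and (4): any bracket $\mu(x_k,x_l)$ with $l\ge i$ lands in $\langle x_m:m\ge i+1\rangle$, since $k=0$ produces $x_{l+1}$ with $l+1\ge i+1$ and $k\ge 1$ produces terms of subscript $\ge k+l\ge i+1$. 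The only delicate point in the whole argument is tracking the even-dimensional exceptional cocycle $\psi_{(n-2)/2,n-1}$ in (3)/(4); it does not interfere with the index bookkeeping in (5), and everything else is direct inspection of subscripts.
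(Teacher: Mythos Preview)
Your argument is correct and is exactly the direct verification the paper has in mind: the paper states the lemma without proof, remarking only that the assertions ``follow directly from this description of $\mu$'', so your index-by-index check against $\mu_0$ and the formula for $\psi_{r,s}$ is precisely what is being left to the reader. The one cosmetic slip is in the induction for (5): your hypothesis $C_{i-1}=\langle x_i,\dots,x_{n-1}\rangle$ is the right form only for $i\ge 2$, so the step from $C_0=\n$ to $C_1=\langle x_2,\dots,x_{n-1}\rangle$ should be stated separately as the base case; the same bracket computation you give (with $k<l$, hence $l\ge 1$) handles it without change.
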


\subsection{The varieties $\F^9$, $\F^{10}$ and $\F^{11}$} \label{subsec:F}

\ 

According to the description above, the varieties $\F^n$ of filiform Lie algebras of dimension $n$ can be parametrized
by the algebraic set of parameters $a_{r,s}\in\Delta_n$ satisfying the polynomial equations
\[ \psi\circ\psi=0, \]
where $\psi$ is as in \eqref{eqn:psi}.

For small $n$, as $n=9,10,11$, the polynomial equations describing the corresponding varieties
$\F^{9}$, $\F^{10}$ and $\F^{11}$ are not difficult to compute.
The irreducible components are much more difficult to obtain.
Their are described in \cite{K} and were computed originally in \cite{GJK1}.

The variety of complex filiform Lie algebras of dimension 9, $\F^9$, may be described   
as the set of complex parameters $a_{r,s}\in\Delta_9$ satisfying 
\begin{equation}\label{eqn:dim9}
 - 3 a_{2,6}^2 + a_{2,6} a_{3,8} + 2 a_{1,4} a_{3,8} = 0.
\end{equation}
It turns out that $\F^9$ is irreducible.

Similarly, $\F^{10}$ may be described as the set of parameters in $\Delta_{10}$ satisfying the equations
\begin{equation}\label{eqn:dim10}
  \begin{aligned}
- 3 a_{2,6}^2 + a_{2,6} a_{3,8} + 2 a_{1,4} a_{3,8} &= 0 \\ 
- 7 a_{2,6} a_{2,7} + (2 a_{1,4} + a_{2,6}) a_{3,9} + 3 ( a_{1,5} + a_{2,7} ) a_{3,8} \qquad \\
                                              - ( a_{2,8} + 2 a_{1,6}) a_{4,9}  &= 0 \\ 
a_{4,9} ( 2 a_{1,4} - a_{2,6} - a_{3,8}) &= 0
  \end{aligned}
\end{equation}
However, $\F^{10}$ is not longer irreducible and has three components, given by 
\[
 C^{10}_1 : \left\{ 
          \begin{aligned} 
          a_{4,9} &= 0\\
          - 3 a_{2,6}^2 + a_{2,6} a_{3,8} + 2 a_{1,4} a_{3,8} &= 0 \\ 
          3 a_{2,6}^2 a_{3,9} + (-7 a_{2,6} a_{2,7} + 3 a_{1,5} a_{3,8} + 3 a_{2,7} a_{3,8}) a_{3,8} &= 0 \\ 
          -7 a_{2,6} a_{2,7} +( 3 a_{1,5} + 3 a_{2,7} ) a_{3,8} + (2 a_{1,4} + a_{2,6} ) a_{3,9} &= 0\\
           \end{aligned}
        \right. 
\] 
\[
C^{10}_2 : \left\{ 
          \begin{aligned}  
          a_{2,6} - a_{3,8} &= 0 \\
          ( 3 a_{1,5} - 4 a_{2,7} + 3 a_{3,9} ) a_{3,8} - ( 2 a_{1,6} + a_{2,8} ) a_{4,9} &= 0\\
          a_{1,4} - a_{3,8} &= 0 
           \end{aligned}
        \right.
\]
\[
C^{10}_3 : \left\{ 
          \begin{aligned}
          3 a_{2,6} + a_{3,8} &= 0 \\
         (9 a_{1,5} + 16 a_{2,7} + a_{3,9} ) a_{3,8} - 3 (2 a_{1,6} +  a_{2,8} ) a_{4,9} &= 0 \\
         3 a_{1,4} - a_{3,8} &= 0
          \end{aligned}
        \right.
\] 

Finally, $\F^{11}$ can be described as the set of parameters in $\Delta_{11}$ satisfying the equations
\begin{equation}\label{eqn:dim11}
 \begin{aligned}
- 3 a_{2,6}^2 + a_{2,6} a_{3,8} + 2 a_{1,4} a_{3,8} &= 0 \\ 
(6 a_{3,8} - 4 a_{2,6}) a_{3,8} + (2 a_{1,4} - a_{2,6} - a_{3,8}) a_{4,10} &= 0\\
- 7 a_{2,6} a_{2,7} + (2 a_{1,4} + a_{2,6}) a_{3,9} + 3 ( a_{1,5} + a_{2,7} ) a_{3,8} &= 0 \\ 
- 4 a_{2,7}^2 - 8 a_{2,6} a_{2,8} + (4 a_{1,6} + 6 a_{2,8}) a_{3,8} + 3(a_{1,5} + a_{2,7}) a_{3,9}\\
 + (2 a_{1,4} + a_{2,6}) a_{3,10} - (2 a_{1,6} + a_{2,8}) a_{4,10} &= 0
\end{aligned}
\end{equation}
It is also not irreducible and has two irreducible components given by
\[
 C^{11}_1 : \left\{ 
          \begin{aligned} 
          a_{1,4} - a_{2,7} &= 0\\
          - 3 a_{2,6}^2 + a_{2,6} a_{3,8} + 2 a_{1,4}a_{3,8} &= 0 \\ 
          - 2 (2 a_{2,6} - 3 a_{3,8}) a_{3,8} + (2a_{2,7} - a_{2,6} - a_{3,8} ) a_{4,10} &= 0 \\
         - 7 a_{2,6}a_{2,7} + (2a_{2,7} + a_{2,6}) a_{3,9} + (3 a_{1,5}+3a_{2,7})a_{3,8}  &= 0 \\ 
         - 4a_{2,7}^2 - (8 a_{2,6} -6 a_{3,8}+a_{4,10}) a_{2,8}  + (3a_{1,5}+3a_{2,7}) a_{3,9} \quad \\
            + (2a_{2,7} + a_{2,6}) a_{3,10} + (4a_{3,8}-2a_{4,10})a_{1,6} &= 0
           \end{aligned}
        \right. 
\]
\[
 C^{11}_2 : \left\{ 
          \begin{aligned} 
         a_{2,8}+a_{2,7}^2-3a_{2,7}a_{2,9}-3a_{2,9} &= 0 \\
         - 3 a_{2,6}^2 + a_{2,6} a_{3,8} + 2 a_{1,4}a_{3,8} &= 0 \\ 
         - 2 (2 a_{2,6} - 3 a_{3,8}) a_{3,8} + (2a_{1,4} - a_{2,6} - a_{3,8} ) a_{4,10} &= 0 \\
         -7a_{2,6}a_{2,7}+3(a_{1,5}+a_{2,7})a_{3,8}+(2a_{1,4}+a_{2,6})a_{3,9} &= 0 \\
         (-4+8a_{2,6}-6a_{3,8}+a_{4,10})a_{2,7}^2 \quad \\
         +(-24a_{2,6}a_{2,9}+18a_{3,8}a_{2,9}+3a_{3,9}-3a_{2,9}a_{4,10})a_{2,7} \quad \\
         +(-24a_{2,6}+3a_{4,10}+18a_{3,8})a_{2,9} + (4a_{3,8}-2a_{4,10})a_{1,6} \quad \\
         + (2a_{1,4}+a_{2,6})a_{3,10} + 3a_{1,5}a_{3,9} &= 0
           \end{aligned}
        \right. 
\]

\begin{remark}\label{rmk:78}
 In dimensions $\le 6$, there are only a finite number of (isomorphisms classes of) filiform Lie algebras,
 actually this is the case for all nilpotent Lie algebras.
 More precisely, in dimensions 3 and 4 there is only one filiform, the standard; 
 there are two of dimension 5 and three of dimension 6.
 In dimensions 7 and 8 there are already infinitely many isomorphisms classes of filiform Lie algebras.
 Moreover, $\F^7$ is the (irreducible) affine space given by $\Delta_7$, 
 and $\F^8$ is parametrized by $\Delta_8$ with the condition
 \[ a_{3,7}(2a_{1,4}+a_{2,6})=0. \]
 Its irreducible components are given by
 \[ C^8_1: \; a_{3,7}=0 \quad\text{and}\quad C^8_2: \; 2a_{1,4}+a_{2,6}=0. \] 
\end{remark}

\section{Linear deformations of filiform Lie algebras}\label{sec:deformations}

Given a filiform Lie algebra $\mu$, defined on $\f=\C^n$ with $n\ge 6$, we use Proposition \ref{prop:deformation}
to construct a linear deformation of it.
To this end consider the ideal $\h=[\f,\f]$ and fix and adapted basis $\{x_0,x_1,\dots,x_{n-1}\}$ of $\f$.
Notice that $\h=\langle x_2,\dots,x_{n-1} \rangle$.

\subsection{Two families of deformations}

We define two different derivations of the ideal $\h$, 
one of it only for the case $n$ is odd, that satisfy the hypothesis of
Proposition \ref{prop:deformation}, and then give rise to two families of
deformations.

\begin{proposition}
 The linear transformations $D^3:\h\rightarrow \h$, defined by
 \[
  D^3(x_2)=x_{n-3},\quad D^3(x_3)=x_{n-2},\quad D^3(x_4)=x_{n-1},
 \]
 and $D^3(x_i)=0$ for $i\ge 5$,
 is a nilpotent derivation of $\h$ and $D^3 \ad(x_0)=\ad(x_0) D^3$ in $\h$.
 
 If $n$ is odd, then the linear transformation $D^4:\h\rightarrow \h$, defined by
 \[ 
  D^4(x_2)=x_{n-4},\quad D^4(x_3)=x_{n-3},\quad D^4(x_4)=x_{n-2},\quad D^4(x_5)=x_{n-1},        
 \]
 and $D^4(x_i)=0$ for $i\ge 6$,
 is a nilpotent derivation of $\h$ and $D^4 \ad(x_0)=\ad(x_0) D^4$ in $\h$.
\end{proposition}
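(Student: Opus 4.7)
I will verify the three assertions for $D^3$; the argument for $D^4$ is analogous, with ``$5$'' replaced by ``$6$'' and image indices in $\{n-4,\dots,n-1\}$, using the sharper support bound $k\ge i+j+1$ from Lemma~\ref{lemma:mu}(3) that is available for odd $n$.

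Nilpotency is immediate from the definition: the only nonzero images of $D^3$ lie in $\{x_{n-3},x_{n-2},x_{n-1}\}$, and for $n\ge 8$ these are annihilated by $D^3$, so $(D^3)^2=0$. For $n=6,7$ a short finite iteration still kills every basis vector.

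The commutation $D^3\ad(x_0)=\ad(x_0)D^3$ on $\h$ is checked vector by vector, using Lemma~\ref{lemma:mu}(1): $\ad(x_0)$ is the index shift $x_i\mapsto x_{i+1}$ on $\langle x_1,\dots,x_{n-2}\rangle$ and vanishes on $x_{n-1}$. On $x_2$ and $x_3$ both composites match by a direct index count (producing $x_{n-2}$ and $x_{n-1}$ respectively); on $x_4$ both vanish, since $D^3(x_5)=0$ on one side and $\ad(x_0)(x_{n-1})=0$ on the other; and on $x_i$ with $i\ge 5$ both sides are trivially zero.

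The derivation property is the main step. By bilinearity and antisymmetry it suffices, for $2\le i<j\le n-1$, to check
\[
 D^3\bigl(\mu(x_i,x_j)\bigr) \;=\; \mu\bigl(D^3(x_i),x_j\bigr)+\mu\bigl(x_i,D^3(x_j)\bigr).
\]
By Lemma~\ref{lemma:mu}(3)--(4), $\mu(x_i,x_j)$ is supported in $\{x_k:k\ge i+j\}$; since $i\ge 2$ and $j\ge 3$ this is contained in $\{x_k:k\ge 5\}$, all of which are killed by $D^3$, so the left-hand side vanishes. For the right-hand side, a nonzero $D^3(x_i)$ is some $x_m$ with $m\ge n-3$, and $\mu(x_m,x_j)$ is (up to an antisymmetry sign) supported in $\{x_k:k\ge m+j\ge(n-3)+3=n\}$, which is out of range; the companion term $\mu(x_i,D^3(x_j))$ is handled symmetrically. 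Thus both sides are zero and the identity holds.

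The main obstacle is not mathematical but bookkeeping: one must keep track of the antisymmetry sign when $D^3(x_i)$ has a larger index than $x_j$, and one must confirm the bound $m+j\ge n$ in the tight small-$n$ corners ($n=6,7$ for $D^3$; $n=7,9$ for $D^4$), where it either degenerates to a bracket $\mu(x_a,x_a)=0$ or is verified by direct inspection.
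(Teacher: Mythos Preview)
Your proof is correct and follows essentially the same approach as the paper's: both arguments verify the derivation identity by showing each side vanishes separately, using the support bounds of Lemma~\ref{lemma:mu}(3)--(4) to see that $\mu(x_i,x_j)\in\langle x_k:k\ge 5\rangle$ (respectively $k\ge 6$ for odd $n$) is killed by $D^3$ (respectively $D^4$), and that $\mu(D(x_i),x_j)$ lands in indices $\ge n$; the commutation with $\ad(x_0)$ and nilpotency are then checked by the same direct index-shift inspection. Your write-up is slightly more compressed and flags the small-$n$ edge cases more explicitly, but the content is identical.
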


\begin{proof}
Let $2\le i<j$.
By Lemma \ref{lemma:mu}, we have that
\[ \mu(x_i, x_j) = \sum_{k\ge 5} c_{ij}^k x_{k}, \]
and if $n$ is odd, actually
\[ \mu(x_i, x_j) = \sum_{k\ge 6} c_{ij}^k x_{k}. \]
Hence, $D^3(\mu(x_i,x_j))=0$ and $D^4(\mu(x_i,x_j))=0$, for $x_i,x_j\in\h$.

If $i\ge 5$, then $\mu(D^3(x_i), x_j))=0$ and also $\mu(x_i, D^3(x_j))=0$.
For $i=2,3,4$ ($j\ge 3$), we have that
\[
\mu(D^3(x_i), x_{j})) = \mu(x_{n-5+i},x_{j}) = 0
\]
because $n-5+i+j\ge n$ (see Lemma \ref{lemma:mu}).
And we also have that
\[
\mu(x_i, D^3(x_j)) = \mu(x_i,\lambda x_{n-5+j}) = 0,
\]
as before, where $\lambda=1$ if $j=3,4$ and $\lambda=0$ otherwise.
Therefore, $D^3$ is a derivation of $\h$.
It is clear that $D^3$ is nilpotent, since $n\ge 6$.

Assume now $n$ is odd.
If $i\ge 6$, then $\mu(D^4(x_i), x_j))=0$ and also $\mu(x_i, D^4(x_j))=0$.
For $i=2,3,4,5$ ($j\ge 3$), we have that
\[
\mu(D^4(x_i),x_{j})) = \mu(x_{n-6+i},x_{j}) = 0
\]
because $n-6+i+j+1\ge n$ (see Lemma \ref{lemma:mu}).
And we also have that
\[
\mu(x_i, D^4(x_j)) = \mu(x_i,\lambda x_{n-6+j}) = 0,
\]
as before, where $\lambda=1$ if $j=3,4,5$ and $\lambda=0$ otherwise.
Therefore, $D^4$ is a derivation of $\h$.
It is clear that $D^4$ is nilpotent, since $n\ge 7$ ($n\ge 6$ and odd).

That $D^3 \ad(x_0)=\ad(x_0) D^3$ in $\h$, follows by taking
$i\ge 2$ and evaluating
\begin{equation*}
D^3( \ad(x_0) x_i) = D^3(x_{i+1}) = 
  \begin{cases}
   x_{n-2} & \text{if $i = 2$} \\
   x_{n-1} & \text{if $i = 3$} \\
      0    & \text{if $i\ge 4$}
  \end{cases}
\end{equation*}
and 
\begin{equation*}
\ad(x_0) D^3(x_i) =
 \begin{cases}
  \ad(x_0)x_{n-3} = x_{n-2}, & \text{if $i=2$} \\
  \ad(x_0)x_{n-2} = x_{n-1}, & \text{if $i=3$} \\
  \ad(x_0)x_{n-1} = 0,       & \text{if $i=4$} \\
         0,                  & \text{if $i\ge 5$}
 \end{cases}
\end{equation*}

Similarly, that $D^4 \ad(x_0)=\ad(x_0) D^4$ in $\h$, follows by taking
$i\ge 2$ and evaluating
\begin{equation*}
D^4( \ad(x_0) x_i) = D^4(x_{i+1}) = 
  \begin{cases}
   x_{n-3} & \text{if $i=2$} \\
   x_{n-2} & \text{if $i=3$} \\
   x_{n-1} & \text{if $i=4$} \\
      0    & \text{if $i\ge 5$}
  \end{cases}
\end{equation*}
and 
\begin{equation*}
\ad(x_0) D^4(x_i) =
 \begin{cases}
  \ad(x_0)x_{n-4} = x_{n-3}, & \text{if $i=2$} \\
  \ad(x_0)x_{n-3} = x_{n-2}, & \text{if $i=3$} \\
  \ad(x_0)x_{n-2} = x_{n-1}, & \text{if $i=4$} \\
  \ad(x_0)x_{n-1} = 0,       & \text{if $i=5$} \\
         0,                  & \text{if $i\ge 6$}
 \end{cases}
\end{equation*}
\end{proof}

In what follows we shall consider, for each $n\ge 9$, one of the linear deformations of $\mu$ given by
\begin{equation}\label{eqn:mut}
 \mu_t=\mu+t\varphi_D,
\end{equation}
where $D=D^3$ or $D=D^4$. 
Precisely, if $n$ is even, we choose $D=D^3$ and if $n$ is odd we choose $D=D^4$,
except for $n=9$ in which case to choose $D=D^3$.

\begin{remark}\label{rmk:phiD}
It turns out, and is easy to check, that $\varphi_{D^3}=\psi_{1,n-3}$ for all $n$
and that $\varphi_{D^4}=\psi_{1,n-4}$ if $n$ is odd.
In particular, an adapted basis for $\mu$ is also an adapted basis for $\mu_t$.
\end{remark}

\subsection{On the isomorphisms between $\mu_t$ and $\mu$}\label{subsec:isos}

From now on let $n\ge 9$, $\mu\in\F^n$ and $\mu_t$ be the linear deformation of $\mu$ 
given in $\eqref{eqn:mut}$.

\begin{remark}
 The choices in $\eqref{eqn:mut}$ were made, in part, to simplify the proofs.
 On the one hand, the choice for $n=9$ allows us to include this case in Lemma \ref{lemma:m11} with all other cases,
 and on the other hand the choice for $n=11$ is crucial for us, since we are not able to prove that the deformation 
 obtained with the other choice is non-trivial.
\end{remark}

Let $g$ be an isomorphism between $\mu_t$ and $\mu$, and $[g]$
its matrix with respect to an adapted basis $\{x_0,\dots,x_{n-1}\}$ for $\mu$ and $\mu_t$.

Since $g$ preserves the central descending series (see Lemma \ref{lemma:mu}), then
\begin{equation}\label{eqn:g}
[g] = \begin{pmatrix}
 m_{1,1} & m_{1,2} & 0 & \cdots & 0\\
 m_{2,1} & m_{2,2}  &  0  & \cdots & 0\\
 m_{3,1} & m_{3,2} & m_{3,3}  & \cdots & 0\\
 \vdots & \vdots & \vdots  & \ddots & \vdots\\
 m_{n,1} & m_{n,2} & m_{n,3} & \cdots & m_{n,n}
 \end{pmatrix}
\end{equation}
Notice that
\[ gx_j=\sum_{i=1}^n m_{i,j+1}x_{i-1}, \qquad\text{for all $j=0,\dots,n-1$.} \]

That $g$ is an isomorphism is equivalent to 
\[ E_{i,j}=g\mu_t(x_i,x_j)-\mu(gx_i,gx_j)=0 \]
for all $0\le i<j\le n-1$.
If $E_{i,j}^k$ is the coefficient of $x_k$ in $E_{i,j}$, 
that is \[ E_{i,j} = g\mu_t(x_i,x_j)-\mu(gx_i,gx_j)=\sum_k E_{i,j}^k x_k, \]
then $g$ is an isomorphism if and only if $E_{i,j}^k=0$, for all $0\le i<j\le n-1$ and $0\le k\le n-1$.

\begin{remark}
 Since $\varphi_D(x_i,x_j)=0$ for almost all pairs $(i,j)$, whatever $D=D^3$ or $D=D^4$,
 most of the equations $E_{i,j}^k=0$ do not depend on the choice of $D$. 
 In fact $\varphi_D(x_i,x_j)\ne 0$ only for $i=1$ and $j=2,3,4$, if $D=D^3$ 
 and $\varphi_D(x_i,x_j)\ne 0$ only for $i=1$ and $j=2,3,4,5$, if $D=D^4$.
 Hence, $E_{i,j}^k=0$ depend on $D$ only for these pairs $(i,j)$ and some values of $k$.
 All other equations are in fact automorphisms equations for $\mu$ itself.
\end{remark}

The goal is to prove that such an isomorphism $g$ only can exist if $t=0$.
We start by showing that the matrix above is always lower triangular and
generically its diagonal entries are all equal to 1.

\begin{lemma}\label{lemma:m12}
 In the matrix $[g]$ in \eqref{eqn:g}, $m_{1,2}=0$.
\end{lemma}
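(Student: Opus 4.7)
My plan is to examine a single entry of a single isomorphism equation, namely the $x_3$-coefficient of $E_{1,2} = g\mu_t(x_1,x_2) - \mu(gx_1,gx_2)$, and show that it reduces to $-m_{1,2}\,m_{3,3}$. Together with invertibility of $g$, this will immediately give $m_{1,2}=0$.

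First I would show that $g\mu_t(x_1,x_2) \in C_3 = \langle x_4,\dots,x_{n-1}\rangle$, so it contributes nothing to the $x_3$-coefficient. On the one hand, from the cocycle description in an adapted basis, the only basic cocycles $\psi_{r,s}$ with $\psi_{r,s}(x_1,x_2)\neq 0$ have $r=1$, so $\mu(x_1,x_2)=\sum_{s\geq 4}a_{1,s}x_s \in C_3$. On the other hand, by Remark~\ref{rmk:phiD} the same adapted basis works for $\mu_t$, and $\varphi_D(x_1,x_2)=D(x_2)$ equals $x_{n-3}$ or $x_{n-4}$, both of which lie in $C_3$ as soon as $n\geq 9$. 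Since $g$ preserves the central descending series (Lemma~\ref{lemma:mu}(5)), $g(C_3)=C_3$, and hence the $x_3$-coefficient of $g\mu_t(x_1,x_2)$ is indeed zero.

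Next I would expand
\[
 \mu(gx_1,gx_2)=\sum_{p=0}^{n-1}\sum_{q=2}^{n-1} m_{p+1,2}\,m_{q+1,3}\,\mu(x_p,x_q)
\]
(using $m_{1,3}=m_{2,3}=0$) and collect the coefficient of $x_3$. For $p\geq 1$, Lemma~\ref{lemma:mu}(3) for odd $n$ gives $\mu(x_p,x_q)\in\langle x_{p+q+1},\dots\rangle\subseteq C_3$; for even $n$, Lemma~\ref{lemma:mu}(4) gives $\mu(x_p,x_q)\in\langle x_{p+q},\dots\rangle$, and the only way an $x_{p+q}$-term with $p+q=3$ could appear is $(p,q)=(1,2)$ with $1\leq (n-2)/2<2$, which fails for $n\geq 9$. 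So no $p\geq 1$ term contributes an $x_3$. The sole surviving contribution is $p=0,\,q=2$, where $\mu(x_0,x_2)=x_3$, giving exactly $m_{1,2}\,m_{3,3}$.

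Combining the two steps, $E_{1,2}^3 = -m_{1,2}\,m_{3,3}=0$; and since the block form \eqref{eqn:g} gives $\det[g]=(m_{1,1}m_{2,2}-m_{1,2}m_{2,1})\prod_{i=3}^{n}m_{i,i}$, invertibility of $g$ forces $m_{3,3}\neq 0$ and therefore $m_{1,2}=0$. The only non-routine point is the case analysis for even $n$ that rules out a stray $x_3$-contribution from a bracket of two elements in $\h$; this is where the choice of the equation $E_{1,2}^3$ (rather than a less restrictive one) and the hypothesis $n\geq 9$ are used crucially.
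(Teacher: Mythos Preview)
Your proof is correct. Both you and the paper prove $m_{1,2}=0$ by isolating a single coefficient of a single isomorphism equation, but you pick a different equation: the paper examines $E_{1,n-3}^{\,n-2}$ (working near the center of the lower central series), whereas you examine $E_{1,2}^{\,3}$ (working near the top). In the paper's approach one needs that $\mu_t(x_1,x_{n-3})$ is a multiple of $x_{n-1}$ and that the only source of $x_{n-2}$ in $\mu(gx_1,gx_{n-3})$ is $m_{1,2}m_{n-2,n-2}\,\mu(x_0,x_{n-3})$; in yours the analogous facts are that $\mu_t(x_1,x_2)\in C_3$ and that the only source of $x_3$ in $\mu(gx_1,gx_2)$ is $m_{1,2}m_{3,3}\,\mu(x_0,x_2)$. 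Your version has the mild advantage that the case analysis for even $n$ (ruling out a stray $d_{1,2}^{\,3}$ term) is very clean—it reduces immediately to the inequality $(n-2)/2\geq 2$—and your invocation of the block-triangular determinant to get $m_{3,3}\neq 0$ is explicit, whereas the paper simply asserts $m_{n-2,n-2}\neq 0$. One small point: when you write ``For $p\geq 1$, Lemma~\ref{lemma:mu}(3)\dots'' you are implicitly using antisymmetry for the terms with $p>q$ (Lemma~\ref{lemma:mu}(3)(4) are stated for $i<j$); this is harmless but worth a word.
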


\begin{proof}
Consider the equation $E_{1,n-3}^{n-2}=0$.
Since
\begin{eqnarray*}
 gx_1 &=& m_{1,2}x_0+m_{2,2}x_1+\dots+m_{n,n}x_{n-1} \\
 gx_{n-3} &=& m_{n-2,n-2}x_{n-3}+m_{n-1,n-1}x_{n-2}+m_{n,n}x_{n-1},
\end{eqnarray*}
then
\begin{eqnarray*}
 E_{1,n-3} &=& g\mu_t(x_1,x_{n-3})-\mu(gx_1,gx_{n-3}) \\
           &=& \alpha_1 x_{n-1} -m_{1,2}m_{n-2,n-2}x_{n-2}-\alpha_2 x_{n-1},           
\end{eqnarray*}
for some $\alpha_1,\alpha_2\in\C$.
Therefore, $E_{1,n-3}^{n-2}=0$ is equivalent to $m_{1,2}m_{n-2,n-2}=0$, and since $m_{n-2,n-2}\ne 0$,
it follows that $m_{1,2}=0$.
\end{proof}

\begin{remark}
The argument is even more direct if we assume $n$ is odd.
In this case, it is easier to evaluate 
$E_{1,n-2}=-m_{1,2}m_{n-1,n-1}x_{n-1}$ from were it follows, as before, that $m_{1,2}=0$. 

In what follows we shall try to keep the arguments independent from the parity of $n$.
\end{remark}

\begin{lemma}\label{lemma:mii}
 The $i$-th diagonal entry of $[g]$, for $3\le i\le n-1$ is $m_{i,i}=m_{2,2}m_{1,1}^{i-2}$. 
 Hence, the diagonal entries of $[g]$ are
 \[ m_{1,1},\ m_{2,2},\ m_{1,1}m_{2,2},\ m_{1,1}^2m_{2,2},\dots, m_{1,1}^{n-3}m_{2,2},m_{n,n}. \] 
\end{lemma}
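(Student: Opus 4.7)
My plan is to prove Lemma \ref{lemma:mii} by induction on $i$ from $3$ up to $n-1$, exploiting at each step the isomorphism equation $E_{0,i-2}^{i-1}=0$, i.e., comparing the coefficients of $x_{i-1}$ in $g\mu_t(x_0,x_{i-2})$ and in $\mu(gx_0,gx_{i-2})$. Since $\varphi_D(x_0,\cdot)=0$ by Proposition \ref{prop:deformation} and $\mu(x_0,x_{i-2})=x_{i-1}$ by Lemma \ref{lemma:mu}(1), the left-hand side reduces to the coefficient of $x_{i-1}$ in $g(x_{i-1})$, namely $m_{i,i}$.

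For the base case $i=3$, the equation $E_{0,1}^2=0$ gives $m_{3,3}=m_{1,1}m_{2,2}$. Indeed, writing $\mu(gx_0,gx_1)=\sum_{a,b}m_{a+1,1}m_{b+1,2}\mu(x_a,x_b)$ and using that $[g]$ is lower triangular (by the central descending series argument preceding \eqref{eqn:g} together with Lemma \ref{lemma:m12}), the only pair whose bracket can contain $x_2$ is $(a,b)=(0,1)$: every other pair satisfies $a,b\ge 1$ with $a+b\ge 3$, and $\mu(x_a,x_b)$ is then supported in $x_k$ with $k\ge a+b$ by parts (3)-(4) of Lemma \ref{lemma:mu}.

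For the inductive step, assuming $m_{i-1,i-1}=m_{2,2}m_{1,1}^{i-3}$, I expand $\mu(gx_0,gx_{i-2})$ and extract the coefficient of $x_{i-1}$. The leading contribution $m_{1,1}m_{i-1,i-1}$ arises from $(a,b)=(0,i-2)$. Any other pair with $a=0$ produces a multiple of $x_{b+1}$ with $b+1>i-1$; lower triangularity of $[g]$ forces $b\ge i-2$, so any pair with $a\ge 1$ satisfies $a+b\ge i-1$, and by Lemma \ref{lemma:mu}(3)-(4) the sole candidate whose bracket can still contain $x_{i-1}$ is $(a,b)=(1,i-2)$.

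The main obstacle is therefore to verify that $\mu(x_1,x_{i-2})$ has no $x_{i-1}$ component throughout $3\le i\le n-1$. For odd $n$ this is immediate from Lemma \ref{lemma:mu}(3). For even $n$, a direct look at the description of $\psi$ shows that the only $\psi_{r,s}$ that can output $x_{i-1}$ on $(x_1,x_{i-2})$ is the exceptional cocycle $\psi_{(n-2)/2,n-1}$, whose coefficient is $\pm\binom{i-n/2-2}{n/2-2}$; this binomial vanishes for $i<n$, exactly the range relevant to the lemma. Hence $m_{i,i}=m_{1,1}m_{i-1,i-1}$, and combining with the inductive hypothesis yields $m_{i,i}=m_{2,2}m_{1,1}^{i-2}$, completing the induction.
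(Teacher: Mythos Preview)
Your proof is correct and follows essentially the same approach as the paper: both use the equations $E_{0,j}^{j+1}=0$ for $1\le j\le n-3$ to derive the recursion $m_{j+2,j+2}=m_{1,1}m_{j+1,j+1}$, and then conclude by induction. In fact you supply more justification than the paper does, carefully explaining why no cross term $\mu(x_a,x_b)$ with $a\ge 1$ can contribute an $x_{j+1}$-component (including the explicit check that the exceptional cocycle $\psi_{(n-2)/2,\,n-1}$ contributes nothing in the relevant range when $n$ is even), whereas the paper simply asserts the form of $E_{0,j}$.
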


\begin{proof}
Consider the family of equations
$E_{0,j}^{j+1}=0$, for $1\le j\le n-3$.
Computing directly, we find that
\begin{eqnarray*}
 E_{0,j} &=& g\mu_t(x_0,x_j)-\mu(gx_0,gx_j) \\
         &=& gx_{j+1}-\mu(m_{1,1}x_0+m_{2,1}x_1+\dots,m_{j+1,j+1}x_j+m_{j+2,j+1}x_{j+1}+\dots) \\
         &=& m_{j+2,j+2}x_{j+1}+m_{j+3,j+2}x_{j+2}+\dots-m_{1,1}m_{j+1,j+1}x_{j+1}+\dots
\end{eqnarray*}
Hence, $E_{0,j}^{j+1}=0$ is equivalent to $m_{j+2,j+2}=m_{1,1}m_{j+1,j+1}$.
Therefore, inductively we get that
\[ m_{3,3}=m_{1,1}m_{2,2},\dots,m_{j,j}=m_{1,1}^{j-2}m_{2,2},\dots,m_{n-1,n-1}=m_{1,1}^{n-3}m_{2,2},m_{n,n}. \]
\end{proof}

\begin{remark}\label{rmk:mnn}
 At this stage we said nothing about the last diagonal entry $m_{n,n}$.
 In case $n$ is odd, $E_{0,n-2}^{n-1}=-m_{1,1}m_{n-1,n-1}+m_{n,n}$, therefore it follows that $m_{n,n}=m_{1,1}^{n-2}m_{2,2}$.
 However, in case $n$ is even, $E_{0,n-2}^{n-1}=a_{4,9}m_{2,1}m_{n-1,n-1}-m_{1,1}m_{n-1,n-1}+m_{n,n}$.
\end{remark}

\begin{lemma}\label{lemma:m22}
If $\mu$ is in the open set of $\F^n$ given by the condition $a_{1,4}\ne 0$,
then $m_{2,2}=m_{1,1}^2$ and the diagonal entries of $[g]$ are
\[ m_{1,1},\ m_{1,1}^2,\ \dots,\ m_{1,1}^{n-1},m_{n,n}. \]
\end{lemma}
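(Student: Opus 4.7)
The plan is to extract the relation $m_{2,2}=m_{1,1}^2$ from a single equation of the form $E_{1,2}^k=0$, chosen so that $a_{1,4}$ appears as an isolated factor. The natural candidate is $k=4$: from the general formula, $\psi_{1,4}(x_1,x_2)=x_4$, and among the cocycles indexed by $\Delta_n$ this is the unique $\psi_{r,s}$ sending $(x_1,x_2)$ to a multiple of $x_4$ (the image of $\psi_{r,s}(x_1,x_2)$ lies in $\langle x_{s-2r+2}\rangle$ with $s-2r+2 \ge 4$, and equality forces $(r,s)=(1,4)$).

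First I would verify that $[\mu_t(x_1,x_2)]_{x_4}=a_{1,4}$: the only contributing cocycle is $\psi_{1,4}$, and the perturbation $t\varphi_D(x_1,x_2)$ is a multiple of $x_{n-3}$ or $x_{n-4}$, which for $n\ge 9$ sits strictly above $x_4$. Next I would expand $\mu(gx_1,gx_2)$. By Lemma \ref{lemma:m12} the matrix $[g]$ is lower triangular, hence $gx_1$ is a linear combination of $x_1,x_2,\dots$ and $gx_2$ of $x_2,x_3,\dots$. Using the bound $i+j+s-2r-1\ge 4$ for $(i,j,r,s)$ with $i\ge 1$, $i\le r<j$ and $(r,s)\in\Delta_n$, with equality only at $(1,2,1,4)$, one sees that the only term in $\mu(gx_1,gx_2)$ reaching $x_4$ comes from $m_{2,2}m_{3,3}\,\mu(x_1,x_2)$. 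This gives $[\mu(gx_1,gx_2)]_{x_4}=a_{1,4}\,m_{2,2}m_{3,3}$, while applying $g$ term by term (and using that $gx_k=m_{k+1,k+1}x_k+\text{higher}$) yields $[g\mu_t(x_1,x_2)]_{x_4}=a_{1,4}\,m_{5,5}$.

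Setting $E_{1,2}^4=a_{1,4}(m_{5,5}-m_{2,2}m_{3,3})=0$ and cancelling the nonzero factor $a_{1,4}$ produces $m_{5,5}=m_{2,2}m_{3,3}$. Substituting $m_{3,3}=m_{1,1}m_{2,2}$ and $m_{5,5}=m_{1,1}^3m_{2,2}$ from Lemma \ref{lemma:mii}, and cancelling $m_{1,1}m_{2,2}$ (nonzero by invertibility of $g$ together with the triangular form), gives $m_{2,2}=m_{1,1}^2$. The claimed form of the diagonal then follows by plugging this back into Lemma \ref{lemma:mii}.

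The main obstacle is the bookkeeping step isolating the $x_4$ coefficient: one must confirm that no other pair $(x_i,x_j)$ appearing in the double expansion of $\mu(gx_1,gx_2)$, nor any $\psi_{r,s}$ with $(r,s)\ne(1,4)$, contributes an $x_4$ component. This reduces to the elementary inequality above, and the choice $D=D^3$ or $D=D^4$ (made in \eqref{eqn:mut}) is innocuous here because $\varphi_D(x_1,x_2)$ lands far above $x_4$ for all $n\ge 9$.
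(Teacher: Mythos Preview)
Your proposal is correct and follows essentially the same route as the paper: both isolate the coefficient of $x_4$ in $E_{1,2}$, obtaining $a_{1,4}(m_{5,5}-m_{2,2}m_{3,3})=a_{1,4}(m_{1,1}^3m_{2,2}-m_{1,1}m_{2,2}^2)=0$, and conclude $m_{2,2}=m_{1,1}^2$ after cancelling $a_{1,4}m_{1,1}m_{2,2}\ne 0$. Your write-up is slightly more explicit than the paper's in justifying why no other pair $(x_i,x_j)$ or cocycle $\psi_{r,s}$ contributes an $x_4$ term, but the argument is the same.
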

\begin{proof}
Let us consider $E_{1,2}=g\mu_t(x_1,x_2)-\mu(gx_1,gx_2)$.
On the one hand 
\begin{eqnarray*}
 \mu_t(x_1,x_2) &=& \mu_0(x_1,x_2)+\sum_\Delta a_{r,s}\psi_{r,s}(x_1,x_2)+t\varphi_D(x_1,x_2) \\
                &=& a_{1,4}x_4+a_{1,5}x_5+\dots+a_{1,n-1}x_{n-1}+tx_{m}
\end{eqnarray*}
where $m=n-3$ if $D=D^3$ or $m=n-4$ if $D=D^4$, and in any case $m\ge 5$.
And then 
\begin{eqnarray*}
 g\mu_t(x_1,x_2) &=& a_{1,4}m_{1,1}^3m_{2,2}x_4+a_{1,4}m_{6,5}x_5+a_{1,5}m_{1,1}^4m_{2,2}x_5+\dots \\
                 &=& a_{1,4}m_{1,1}^3m_{2,2}x_4+(a_{1,4}m_{6,5}+a_{1,5}m_{1,1}^4m_{2,2})x_5+\dots
\end{eqnarray*}
On the other hand
\begin{eqnarray*}
\mu(gx_1,gx_2) &=& \mu(m_{2,2}x_1+m_{3,2}x_2+\dots,m_{1,1}m_{2,2}x_2+m_{4,3}x_3+\dots) \\
               &=& a_{1,4}m_{1,1}m_{2,2}^2x_4+(a_{1,5}m_{1,1}m_{2,2}^2+a_{1,4}m_{2,2}m_{4,3})x_5+\dots
\end{eqnarray*}
Therefore, $E_{1,2}^4=0$ is  
\[ a_{1,4}m_{1,1}^3m_{2,2}=a_{1,4}m_{1,1}m_{2,2}^2. \]
If $a_{1,4}\ne 0$, this implies that $m_{2,2}=m_{1,1}^2$ and the result follows
from Lemma \ref{lemma:mii}.
\end{proof}

We end this section by showing that the diagonal entries of $[g]$ are all equal to 1
in a large open set of $\F^n$, in the cases we shall deal with.

\begin{lemma}\label{lemma:m11}
 In the Zariski open set $U$ of $\F^n$,
 \[ U=\{ a_{1,4}\ne 0, a_{1,5}\ne 0, 3a_{2,6}a_{1,5}(a_{1,4}-a_{2,6})-2a_{2,7}a_{1,4}^2\ne 0 \}, \]
 $m_{1,1}=1$ and $m_{2,1}=0$. 
\end{lemma}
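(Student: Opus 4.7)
The strategy is to derive two polynomial relations in $m_{1,1}$ and $m_{2,1}$ from carefully chosen coefficients $E_{i,j}^k = 0$ of the isomorphism identity, and to combine them using the hypotheses defining $U$ to conclude.

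First I would establish an explicit formula for the first subdiagonal entries $m_{k+2,k+1}$. Under $a_{1,4} \neq 0$, Lemmas \ref{lemma:mii} and \ref{lemma:m22} give $m_{i,i} = m_{1,1}^i$ for $1 \leq i \leq n-1$. The identity $E_{0,1}^3 = 0$ gives $m_{4,3} = m_{1,1}\, m_{3,2}$, leaving $m_{3,2}$ free; computing $E_{0,k}^{k+2} = 0$ for $k \geq 2$ yields the recurrence $m_{k+2,k+1} = m_{1,1}\, m_{k+1,k} + a_{1,4}\, m_{2,1}\, m_{1,1}^{k+1}$, with closed form
\[
m_{k+2,k+1} = m_{1,1}^{k-1}\, m_{3,2} + (k-2)\, a_{1,4}\, m_{2,1}\, m_{1,1}^k \qquad (k \geq 2).
\]

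Next I would compute $E_{1,2}^5 = 0$. For our choice of $D$ one has $\varphi_D(x_1, x_2) = x_m$ with $m \geq 6$ when $n \geq 9$, so no $t$-contribution appears at the coefficient of $x_5$. Expanding $g\mu_t(x_1,x_2)|_{x_5}$ and $\mu(gx_1, gx_2)|_{x_5}$ and substituting the closed forms for $m_{4,3}$ and $m_{6,5}$, the dependence on the free parameter $m_{3,2}$ cancels, yielding
\[
(\star)\qquad 2\, a_{1,4}^2\, m_{2,1} + a_{1,5}\, m_{1,1}(m_{1,1}^2 - 1) = 0.
\]
Likewise I would compute $E_{2,3}^7 = 0$, which is automatically independent of $t$ since $\varphi_D(x_2, x_3) = 0$ for both $D = D^3$ and $D = D^4$. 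Substituting the closed forms for $m_{5,4}$ and $m_{8,7}$ again produces cancellation of the $m_{3,2}$-terms, giving
\[
(\star\star)\qquad 3\, a_{1,4}\, a_{2,6}\, m_{2,1} + a_{2,7}\, m_{1,1}(m_{1,1} - 1) = 0.
\]

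Eliminating $m_{2,1}$ between $(\star)$ and $(\star\star)$, using $a_{1,4} \neq 0$ and $m_{1,1} \neq 0$, produces the factorization
\[
(m_{1,1} - 1)\, \bigl[\, 3\, a_{2,6}\, a_{1,5}\,(m_{1,1} + 1) - 2\, a_{1,4}\, a_{2,7}\, \bigr] = 0.
\]
The first factor forces $m_{1,1} = 1$, and substituting back into $(\star)$ gives $m_{2,1} = 0$ (using $a_{1,5} \neq 0$ is not even needed here, only $a_{1,4} \neq 0$). For the spurious second factor, $m_{1,1}$ is pinned to a specific rational expression in the $a_{r,s}$; invoking the third equation $E_{1,3}^6 = 0$, which after the same type of substitution yields $a_{2,6}\, m_{3,2} = 2\, a_{1,4}^2\, m_{2,1}\, m_{1,1} + a_{1,5}\, m_{1,1}^2(m_{1,1}-1)$, and eliminating against $(\star)$ and $(\star\star)$ shows that this branch can survive only if $3\, a_{2,6}\, a_{1,5}\,(a_{1,4} - a_{2,6}) = 2\, a_{2,7}\, a_{1,4}^2$, which is precisely the polynomial excluded by the hypothesis defining $U$.

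The hard part is the last step: the cancellations in the recurrence of Step 1 and in the $m_{3,2}$-terms of $(\star)$, $(\star\star)$ are systematic, but isolating exactly the polynomial $3\, a_{2,6}\, a_{1,5}\,(a_{1,4} - a_{2,6}) - 2\, a_{2,7}\, a_{1,4}^2$ as the obstruction to the spurious branch requires careful bookkeeping of the contributions from each cocycle $\psi_{r,s}$ in the expansion of $\mu$.
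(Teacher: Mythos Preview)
Your overall strategy---extract two scalar relations in $m_{1,1}$ and $m_{2,1}$ from suitable coefficients $E_{i,j}^k$ and eliminate---is exactly the paper's approach (the paper uses the combinations $E1=a_{1,4}E_{0,4}^6-E_{1,3}^6$ and $E2=(a_{1,4}-a_{2,6})E_{0,5}^7-E_{1,4}^7-m_{1,1}E1$, but your choice $E_{1,2}^5$ and $E_{2,3}^7$ is equivalent). However, your execution contains a computational error that propagates and creates the ``spurious branch'' you then struggle to eliminate.

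The error is in the recurrence for the subdiagonal. For $k\ge 4$ the equation $E_{0,k}^{k+2}=0$ involves $\mu(x_1,x_k)$, whose leading coefficient is \emph{not} $a_{1,4}$ but $a_{1,4}-(k-3)a_{2,6}+\cdots$ (see Lemma~\ref{lemma:mu}, e.g.\ $\mu(x_1,x_4)=(a_{1,4}-a_{2,6})x_6+\cdots$ and $\mu(x_1,x_5)=(a_{1,4}-2a_{2,6})x_7+\cdots$). So your closed form holds only for $k\le 4$; the correct value is
\[
 m_{8,7}=m_{1,1}^{5}m_{3,2}+(4a_{1,4}-3a_{2,6})\,m_{2,1}\,m_{1,1}^{6}.
\]
With this, the $m_{3,2}$-terms still cancel in $E_{2,3}^7$ and you obtain the \emph{correct} $(\star\star)$:
\[
 3\,a_{2,6}(a_{1,4}-a_{2,6})\,m_{2,1}+a_{2,7}\,m_{1,1}(m_{1,1}-1)=0,
\]
not $3a_{1,4}a_{2,6}m_{2,1}+\cdots$. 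Likewise your $(\star)$ should read $2a_{1,4}^2m_{2,1}+a_{1,5}m_{1,1}(m_{1,1}-1)=0$ (the factor is $m_{1,1}-1$, not $m_{1,1}^2-1$). Eliminating $m_{2,1}$ between the corrected $(\star)$ and $(\star\star)$ now gives directly
\[
 m_{1,1}(m_{1,1}-1)\bigl[\,3a_{2,6}a_{1,5}(a_{1,4}-a_{2,6})-2a_{2,7}a_{1,4}^{2}\,\bigr]=0,
\]
so in $U$ one gets $m_{1,1}=1$ immediately and then $m_{2,1}=0$ from $(\star)$. There is no spurious branch, and the third condition defining $U$ appears on the nose; the extra work you propose with $E_{1,3}^6$ is unnecessary once the recurrence is fixed.
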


\begin{proof}
Let us consider, in the given order, the following equations:
\[ E_{0,1}^3=0,\quad E_{0,2}^4=0,\quad E_{0,3}^5=0,\quad E_{0,4}^6=0, \]
and solve in each step $m_{4,3},m_{5,4},m_{6,5},m_{7,6}$.
We then get
\begin{eqnarray*}
 m_{4,3} &=& m_{1,1}m_{3,2} \\
 m_{5,4} &=& a_{1,4}m_{1,1}^3m_{2,1}+m_{1,1}^2m_{3,2} \\
 m_{6,5} &=& m_{1,1}(a_{1,4}m_{1,1}^3m_{2,1}+m_{1,1}^2m_{3,2})+a_{1,4}m_{2,1}m_{1,1}^4 \\
 m_{7,6} &=& m_{1,1}\big(m_{1,1}(a_{1,4}m_{1,1}^3m_{2,1}+m_{1,1}^2m_{3,2})+a_{1,4}m_{2,1}m_{1,1}^4\big) \\
         & & \quad +(-a_{2,6}+a_{1,4})m_{2,1}m_{1,1}^5
\end{eqnarray*}

Now consider
\begin{eqnarray*}
 E1 &=& a_{1,4}E_{0,4}^6-E_{1,3}^6 \\
    &=& -2a_{1,4}^2m_{1,1}^5m_{2,1}-a_{1,5}m_{1,1}^7+a_{1,5}m_{1,1}^6 \\
\end{eqnarray*}
From $E1=0$, it follows that
\begin{equation}\label{eqn:m21}
 m_{2,1}=\frac{m_{1,1}a_{1,5}(1-m_{1,1})}{2a_{1,4}^2}.
\end{equation}
Finally, consider
\begin{eqnarray*}
 E2 &=& (a_{1,4}-a_{2,6})E_{0,5}^7-E_{1,4}^7-m_{1,1}E1 \\
    &=& \frac{m_{1,1}^7(1-m_{1,1})\big(3a_{2,6}a_{1,5}(a_{1,4}-a_{2,6})-2a_{2,7}a_{1,4}^2\big)}{2a_{1,4}^2} 
\end{eqnarray*}
Hence from $E2=0$ and since $m_{1,1}\ne 0$, it follows that inside $U$, $m_{1,1}=1$. 
And from \eqref{eqn:m21} it follows that $m_{2,1}=0$.
\end{proof}

\begin{remark}[On the proof of Lemma \ref{lemma:m11}]
Notice that $E_{1,3}^6$ does not depend on $D$.
In fact $D^3(x_3)=x_{n-2}$ and $D^4(x_3)=x_{n-3}$, so with the choices made in $\eqref{eqn:mut}$, 
it follows that $n-2\ge 7$ and $n-3\ge 7$ respectively.
\end{remark}

\begin{lemma}\label{lemma:m32}
 In $U$, it holds in addition that
 \[ m_{n-1,n-2}=\dots=m_{4,3}=m_{3,2}. \]
\end{lemma}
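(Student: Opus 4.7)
The plan is to continue the recursion already set up in the proof of Lemma \ref{lemma:m11}. There the equations $E_{0,1}^{3}=0,\dots,E_{0,4}^{6}=0$ were used to express $m_{4,3},m_{5,4},m_{6,5},m_{7,6}$ in terms of $m_{1,1},m_{2,1}$ and $m_{3,2}$, and since inside $U$ we have $m_{1,1}=1$ and $m_{2,1}=0$, those four formulas immediately collapse to $m_{4,3}=m_{5,4}=m_{6,5}=m_{7,6}=m_{3,2}$. I will run the same type of recursion further by using $E_{0,j}^{j+2}=0$ for $j=5,6,\dots,n-4$, so as to reach $m_{n-1,n-2}$.

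Expanding $gx_0=\sum_i m_{i,1}x_{i-1}$ and $gx_j=\sum_\ell m_{\ell,j+1}x_{\ell-1}$ and using that $\mu_t(x_0,x_j)=x_{j+1}$ (because $\varphi_D(x_0,\cdot)=0$), the equation $E_{0,j}^{j+2}=0$ takes the shape
\[
 m_{j+3,j+2}\;=\;\sum_{i,\ell}\,m_{i,1}\,m_{\ell,j+1}\,\bigl[\mu(x_{i-1},x_{\ell-1})\bigr]_{x_{j+2}}.
\]
The triangularity of $[g]$ restricts the sum to $\ell\ge j+1$. Applying the degree bounds in Lemma \ref{lemma:mu}, the only non-vanishing contributions are: (a) the pair $i=1$, $\ell=j+2$, contributing $m_{1,1}m_{j+2,j+1}$; (b) pairs with $i=2$, which are all multiples of $m_{2,1}$; and, when $n$ is even, (c) the pair $i=3$, $\ell=j+1$, which carries the exceptional coefficient $d_{2,j}^{j+2}$. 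For $n$ odd, part (4) forces $\mu(x_{i-1},x_{\ell-1})\in C_{i+\ell-2}$, ruling out any $i\ge 3$ altogether; for $n$ even, the only cocycle in $\Delta_n$ that can produce an $x_{j+2}$-component in $\mu(x_2,x_j)$ is $\psi_{(n-2)/2,\,n-1}$, whose value on $(x_2,x_j)$ is a multiple of $\binom{j-n/2}{(n-6)/2}$, and this binomial vanishes for every $j\le n-4$.

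Using $m_{1,1}=1$ and $m_{2,1}=0$ from Lemma \ref{lemma:m11}, the equation $E_{0,j}^{j+2}=0$ therefore reduces to $m_{j+3,j+2}=m_{j+2,j+1}$ for every $j$ in the required range, and a straightforward induction starting from $m_{4,3}=m_{3,2}$ gives
\[
 m_{n-1,n-2}=m_{n-2,n-3}=\cdots=m_{4,3}=m_{3,2},
\]
as desired. The only non-routine point, and hence the step I expect to be the main obstacle, is the even-$n$ case: one has to rule out the potential interference from the exceptional cocycle $\psi_{(n-2)/2,n-1}$, which is done by noting the vanishing of the binomial coefficient above; everything else is a mechanical continuation of the calculation already performed in Lemma \ref{lemma:m11}.
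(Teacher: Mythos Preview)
Your proof is correct and follows essentially the same route as the paper: both arguments use the equations $E_{0,j}^{j+2}=0$ for $j=1,\dots,n-4$ (you simply recycle the first four from Lemma~\ref{lemma:m11}) to obtain the recursion $m_{j+3,j+2}=m_{j+2,j+1}$ once $m_{1,1}=1$ and $m_{2,1}=0$. The paper states the simplified form of these equations directly, whereas you supply the extra bookkeeping---in particular the verification that the exceptional cocycle $\psi_{(n-2)/2,\,n-1}$ contributes nothing for $j\le n-4$---which is a welcome clarification but not a different method.
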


\begin{proof}
From Lemmas \ref{lemma:mii}, \ref{lemma:m22} and \ref{lemma:m11}
together with Remark \ref{rmk:mnn} it follows that in $U$, $m_{2,1}=0$ and
all diagonal entries of $[g]$ are equal to 1.

We then have that
 \[ E_{0,1}^3=m_{4,3}-m_{3,2},\; E_{0,2}^4=m_{5,4}-m_{4,3},\;\dots,\; E_{0,n-4}^{n-2}=m_{n-1,n-2}-m_{n-2,n-3}, \]
 and therefore the result follows from the equations 
 \[ E_{0,1}^3=E_{0,2}^4=\dots=E_{0,n-4}^{n-2}=0. \]
\end{proof}

All together yields the following form for the matrix $[g]$,
that we shall use repeatedly afterwards. 

\begin{proposition}\label{prop:g}
 Let $n\ge 9$, $\mu\in\F^n$ and $\mu_t$ be the linear deformation of $\mu$ as in $\eqref{eqn:mut}$
 Let $g$ be an isomorphism between $\mu_t$ and $\mu$, and $[g]$
 its matrix with respect to an adapted basis $\{x_0,\dots,x_{n-1}\}$ for $\mu$ and $\mu_t$.
 
 Then in the (Zariski) open set of $\F^n$
 \[ U=\{ a_{1,4}\ne 0, a_{1,5}\ne 0, 3a_{2,6}a_{1,5}(a_{1,4}-a_{2,6})-2a_{2,7}a_{1,4}^2\ne 0 \}, \] 
 $[g]$ is of the form ($m_{3,2}=a$)
 \[ [g] = \begin{pmatrix}
  1 & 0 & 0 & \cdots & 0 & 0 \\
  0 & 1 &  0  & \cdots & 0 & 0 \\
 m_{3,1} & a & 1  & \cdots & 0 & 0 \\
 \vdots & \vdots & \ddots  & \ddots & \vdots & \vdots \\
 \vdots & \vdots & \vdots  & a & 1 & 0 \\
 m_{n,1} & m_{n,2} & m_{n,3} & \cdots & m_{n,n-1} & 1
 \end{pmatrix}.
 \]
\end{proposition}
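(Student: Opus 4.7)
The proposition is essentially a bookkeeping assembly of the preceding lemmas, so my plan is to show that every hypothesis needed to invoke them is already packaged into the open set $U$, and then to read off the claimed form of $[g]$ entry by entry.

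First I would note that Lemma \ref{lemma:m12} gives $m_{1,2}=0$ with no extra assumption, so $[g]$ has the lower-triangular shape displayed in \eqref{eqn:g}. Next I would apply Lemma \ref{lemma:m11}, whose hypotheses are precisely the three non-vanishing conditions defining $U$, to deduce $m_{1,1}=1$ and $m_{2,1}=0$. Since $a_{1,4}\ne 0$ in $U$, Lemma \ref{lemma:m22} then yields $m_{2,2}=m_{1,1}^2=1$, and Lemma \ref{lemma:mii} upgrades this to $m_{i,i}=m_{2,2}m_{1,1}^{i-2}=1$ for all $3\le i\le n-1$.

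The one diagonal entry not covered by Lemma \ref{lemma:mii} is $m_{n,n}$, and this is the only place where the parity of $n$ would nominally intervene. I would handle both cases uniformly using Remark \ref{rmk:mnn}: when $n$ is odd, the remark gives $m_{n,n}=m_{1,1}^{n-2}m_{2,2}=1$ directly, and when $n$ is even, the formula $E_{0,n-2}^{n-1}=a_{4,9}m_{2,1}m_{n-1,n-1}-m_{1,1}m_{n-1,n-1}+m_{n,n}$ simplifies to $m_{n,n}-1=0$ once we substitute $m_{2,1}=0$, $m_{1,1}=1$, and $m_{n-1,n-1}=1$. Either way $m_{n,n}=1$.

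Finally, with the full diagonal equal to $1$ and $m_{2,1}=0$ established, Lemma \ref{lemma:m32} applies verbatim to give $m_{3,2}=m_{4,3}=\cdots=m_{n-1,n-2}$; setting $a:=m_{3,2}$ produces the claimed subdiagonal. The remaining entries $m_{3,1},\dots,m_{n,1}$ and $m_{n,2},\dots,m_{n,n-1}$ are simply left unconstrained, which matches the matrix displayed in the statement. The only thing to be careful about is verifying that the subsets of $U$ in which each lemma was proved coincide (they do, by construction), so no obstacle arises beyond packaging; the proof is essentially a citation chain.
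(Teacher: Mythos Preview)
Your proposal is correct and matches the paper's approach exactly: the proposition is stated without an explicit proof in the paper and functions as a summary of Lemmas~\ref{lemma:m12}, \ref{lemma:mii}, \ref{lemma:m22}, \ref{lemma:m11}, \ref{lemma:m32} together with Remark~\ref{rmk:mnn}, precisely as you have chained them. The only minor redundancy is that the argument for $m_{n,n}=1$ via Remark~\ref{rmk:mnn} is already folded into the proof of Lemma~\ref{lemma:m32} in the paper, so you could simply cite that lemma for both the last diagonal entry and the subdiagonal; but your explicit treatment of $m_{n,n}$ is equally valid.
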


\begin{remark}
The open set $U$ is big enough for our purpose.
$U$ intersects non-trivially all the irreducible components of $\F^n$, for $n=9,10,11$.
We shall show this in due time.
\end{remark}

\section{Non-trivial deformations}

In this section we prove that for $\mu$ in a dense open set of $\F^n$, for $n=9,10,11$, 
the linear deformations constructed in Section \ref{sec:deformations} are non-trivial.
It then follows that there are no rigid filiform Lie algebras of these dimensions.

\subsection{Dimension 9}

Let $\mu$ be a given filiform Lie bracket of dimension 9 and let 
$\{x_0, x_1, x_2, x_3, x_4, x_5, x_6, x_7, x_8 \}$ be a adapted basis for it.
Then, there are $a_{r,s}\in\C$, with $(r,s)\in\Delta_9$, such that: 
\begin{eqnarray*}
\mu &=& \mu_0 + a_{1,4} \psi_{1,4} + a_{1,5} \psi_{1,5} + a_{1,6} \psi_{1,6} + a_{1,7} \psi_{1,7} + a_{1,8} \psi_{1,8} \\
     &&  \phantom{\mu_0 + a_{1,4} \psi_{1,4} + a_{1,5} \psi_{1,5} }  
          + a_{2,6} \psi_{2,6} + a_{2,7} \psi_{2,7} + a_{2,8} \psi_{2,8} \\
     &&  \phantom{\mu_0 + a_{1,4} \psi_{1,4} + a_{1,5} \psi_{1,5} + a_{1,6} \psi_{1,6} + a_{1,7} \psi_{1,7}}
          + a_{3,8} \psi_{3,8}.
\end{eqnarray*}
So that $\mu(x_0, x_j) = x_{j+1}$, for $1\le j\le 7$, and
\begin{eqnarray*}
\mu(x_1, x_2) &=& a_{1,4} x_4 + a_{1,5} x_5 + a_{1,6} x_6 + a_{1,7} x_7 + a_{1,8} x_8 \\
\mu(x_1, x_3) &=& a_{1,4} x_5 + a_{1,5} x_6 + a_{1,6} x_7 + a_{1,7} x_8\\
\mu(x_1, x_4) &=& ( a_{1,4} - a_{2,6}) x_6 + ( a_{1,5} - a_{2,7}) x_7 + ( a_{1,6} - a_{2,8}) x_8\\
\mu(x_1, x_5) &=& ( a_{1,4} - 2 a_{2,6}) x_7 + ( a_{1,5} - 2 a_{2,7}) x_8 \\
\mu(x_1, x_6) &=& ( a_{1,4} - 3 a_{2,6} + a_{3,8} ) x_8\\
\mu(x_2, x_3) &=& a_{2,6} x_6 + a_{2,7} x_7 + a_{2,8} x_8\\
\mu(x_2, x_4) &=& a_{2,6} x_7 + a_{2,7} x_8\\
\mu(x_2, x_5) &=& ( a_{2,6} - a_{3,8} ) x_8\\
\mu(x_3, x_4) &=& a_{3,8} x_8.
\end{eqnarray*}

Let $\mu_t$ be the linear deformation of $\mu$ defined in Section \ref{sec:deformations} 
associated to $D^3$, that is (see Remark \ref{rmk:phiD})
\[ \mu_t=\mu + t\psi_{1,6}. \]

\begin{proposition}\label{prop:dim9}
 Let $\mu\in\F^9$ be a given filiform Lie bracket of dimension 9
 and let $U^9$ be the following Zariski open set of $\F^9$,
 \begin{eqnarray*}
 U^9 &=& U \cap U' \\
   &=& \{ a_{1,4}\ne 0, a_{1,5}\ne 0, 3a_{2,6}a_{1,5}(a_{1,4}-a_{2,6})\ne 2a_{2,7}a_{1,4}^2 \} \\
    && \quad \cap \{2a_{2,6}-a_{1,4}\ne 0, a_{3,8}\ne 0  \}.
 \end{eqnarray*}
 If $\mu\in U^9$, then the filiform Lie bracket $\mu_t=\mu+t\psi_{1,6}$ is not isomorphic to $\mu$, except for $t=0$. 
\end{proposition}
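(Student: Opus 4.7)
The plan is to take an arbitrary isomorphism $g : \mu_t \to \mu$ for $\mu \in U^9$, apply Proposition \ref{prop:g} to normalize $[g]$, and then cascade through further equations $E_{i,j}^k = 0$ until a single scalar identity survives that forces $t = 0$.

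First, since $U^9 \subset U$, Proposition \ref{prop:g} gives directly that $[g]$ is lower triangular with all diagonal entries equal to $1$, with $m_{1,2} = m_{2,1} = 0$, and with common subdiagonal entry $m_{3,2} = m_{4,3} = \dots = m_{8,7} = a$. Since $n=9$ is odd, Remark \ref{rmk:mnn} adds $m_{9,9} = 1$. So the remaining unknowns are the strictly sub-subdiagonal entries $m_{i,j}$ (with $i \ge j+2$), together with the parameter $t$.

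Second, I would keep solving the equations $E_{0, j}^{k} = 0$ for $k > j+1$, which are $t$-independent because $\mu_t(x_0, x_j) = \mu(x_0, x_j) = x_{j+1}$. Together with the analogous $E_{i,j}^k = 0$ that involve only the standard shift action and do not meet the support of $\psi_{1,6}$, these propagate the first two columns of $[g]$ into every lower row, expressing each sub-subdiagonal entry $m_{k, j}$ as a polynomial in $a$, the first-column entries $m_{k, 1}$, and the structure constants $a_{r,s}$. This is a direct extension of the mechanism already used in Lemmas \ref{lemma:mii} and \ref{lemma:m32}, and at this stage the computation is purely an automorphism calculation for $\mu$ itself.

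Third, the crucial step: the only equations in which $t$ appears explicitly are $E_{1, j}^k = 0$ with $j \in \{2, 3, 4\}$, since $\psi_{1,6}(x_1, x_j) \ne 0$ precisely for those $j$, contributing $t x_6, t x_7, t x_8$ respectively. Substituting the expressions from the previous step into these equations yields a small polynomial system in $t$, $a$, the $m_{k, 1}$ and the $a_{r, s}$. Taking a suitable $\C$-linear combination (in the spirit of the combinations $E_1, E_2$ used in the proof of Lemma \ref{lemma:m11}) together with the Jacobi relation \eqref{eqn:dim9}, I would collapse all dependence on the $m_{k,j}$ and on $a$, leaving an identity of the form
\[
t \cdot (2a_{2,6} - a_{1,4}) \cdot a_{3,8} \cdot R(a_{r,s}) = 0,
\]
where $R$ is a polynomial that is a nonzero unit on $U$. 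By the very definition of $U' = \{2a_{2,6} - a_{1,4} \ne 0,\ a_{3,8} \ne 0\}$, the prefactor is nonzero on $U^9 = U \cap U'$, so necessarily $t = 0$.

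The main obstacle is purely computational: identifying the precise $\C$-linear combination of the $E_{1, j}^k = 0$ equations whose non-$t$ contribution telescopes to zero modulo \eqref{eqn:dim9}. The open conditions defining $U'$ essentially dictate which two factors must remain in the residual identity, so they strongly guide the search, but the bookkeeping in the adapted basis still has to be carried out carefully. Once the identity above is produced, the conclusion $\mu_t \not\cong \mu$ for $t \ne 0$ on $U^9$ is immediate.
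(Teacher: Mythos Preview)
Your plan is correct and matches the paper's strategy: normalize $[g]$ via Proposition~\ref{prop:g}, use the $t$-independent equations $E_{0,j}^{j+2}=0$ to express the second subdiagonal in terms of $m_{3,1},m_{4,2}$, and then take a linear combination that, together with the Jacobi relation~\eqref{eqn:dim9}, isolates $t$ behind the factor $(2a_{2,6}-a_{1,4})a_{3,8}$.

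The only point worth flagging is the shape of the ``suitable linear combination''. In the paper it is done in two moves rather than one: first a $t$-\emph{free} combination
\[
E \;=\; a_{2,7}E_{1,5}^8-(a_{1,4}-2a_{2,6})E_{2,3}^8+(a_{1,4}a_{2,6}-2a_{2,6}^2)E_{0,5}^8
\]
collapses, modulo~\eqref{eqn:dim9}, to $(-2a_{2,6}+a_{1,4})a_{3,8}(m_{3,2}^2-2m_{4,2})$, whence $m_{4,2}=\tfrac12 m_{3,2}^2$ on $U^9$; and only then does the single $t$-carrying equation $E_{1,2}^6=-a_{2,6}(m_{3,2}^2-2m_{4,2})+t$ give $t=0$. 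So the decisive combination actually lives among $E_{1,5}^8$, $E_{2,3}^8$, $E_{0,5}^8$ (none of which sees $t$), not among the $E_{1,j}^k$ with $j\in\{2,3,4\}$ you single out. Your one-shot identity $t\,(2a_{2,6}-a_{1,4})a_{3,8}=0$ is recoverable by combining these two steps, so there is no gap---just be aware that the search for the combination should include $E_{2,3}^8$ and $E_{1,5}^8$.
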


\begin{proof}
 We proceed as we did it in Section \ref{subsec:isos}.
 So let $g$ be an isomorphism from $\mu_t$ to $\mu$ and let $[g]$
 be its matrix with respect to a standard basis $\{x_0,x_1,\dots,x_8\}$.
 Then $[g]$ is as in Proposition \ref{prop:g}. 
 
 From equations
 \[ E_{0,1}^4=0,\ E_{0,2}^5=0,\ E_{0,3}^6=0,\ E_{0,4}^7=0, \]
 we get that
 \begin{eqnarray*}
  m_{5,3} &=& -a_{1,4}m_{3,1}+m_{4,2} \\
  m_{6,4} &=& -a_{1,4}m_{3,1}+m_{4,2} \\
  m_{7,5} &=& -a_{1,4}m_{3,1}+ a_{2,6}m_{3,1}+m_{4,2} \\
  m_{8,6} &=& -a_{1,4}m_{3,1}+ 2a_{2,6}m_{3,1}+m_{4,2} \\
 \end{eqnarray*}
Let
\begin{eqnarray*}
  E &=& a_{2,7}E_{1,5}^8-(a_{1,4}-2a_{2,6})E_{2,3}^8+(a_{1,4}a_{2,6}-2a_{2,6}^2)E_{0,5}^8 \\
     &=& (-2a_{2,6}+a_{1,4})\big((2a_{1,4}a_{3,8}-3a_{2,6}^2+a_{2,6}a_{3,8})m_{3,1}+a_{3,8}(m_{3,2}^2-2m_{4,2})  \big)
\end{eqnarray*} 
and notice that $2a_{1,4}a_{3,8}-3a_{2,6}^2+a_{2,6}a_{3,8}=0$ is the defining equation of $\F^9$.
 Hence
 \[ E=(-2a_{2,6}+a_{1,4})a_{3,8}(m_{3,2}^2-2m_{4,2}).\]
 Therefore, in $U^9$, it is 
 \[ m_{4,2}=\frac{1}{2}m_{3,2}^2. \]
 Finally, since
 \[ E_{1,2}^6=-a_{2,6}m_{3,2}^2+2a_{2,6}m_{4,2}+t, \]
 it follows that, in $U^9$, $t=0$.
\end{proof}

\subsection{Dimension 10}

Let $\mu$ be a given filiform Lie bracket of dimension 10 and let 
$\{x_0, x_1, x_2, x_3, x_4, x_5, x_6, x_7, x_8, x_9 \}$ be a adapted basis for it.
Then, there are $a_{r,s}\in\C$, with $(r,s)\in\Delta_{10}$, such that: 
\begin{eqnarray*}
\mu &=& \mu_0 + a_{1,4} \psi_{1,4} + a_{1,5} \psi_{1,5} + a_{1,6} \psi_{1,6}+a_{1,7}\psi_{1,7}+a_{1,8}\psi_{1,8}+a_{1,9}\psi_{1,9} \\
     &&  \phantom{\mu_0 + a_{1,4} \psi_{1,4} + a_{1,5} \psi_{1,5} }  
          + a_{2,6} \psi_{2,6} + a_{2,7} \psi_{2,7} + a_{2,8} \psi_{2,8} + a_{2,9}\psi_{2,9} \\
     &&  \phantom{\mu_0 + a_{1,4} \psi_{1,4} + a_{1,5} \psi_{1,5} + a_{1,6} \psi_{1,6} + a_{1,7} \psi_{1,7}}
          + a_{3,8} \psi_{3,8} + a_{3,9}\psi_{3,9} \\
     &&  \phantom{\mu_0 + a_{1,4} \psi_{1,4} + a_{1,5} \psi_{1,5} + a_{1,6} \psi_{1,6}+a_{1,7}\psi_{1,7}+a_{1,8}\psi_{1,8}}
          + a_{4,9}\psi_{4,9}
\end{eqnarray*}
So that $\mu(x_0, x_j) = x_{j+1}$, for $1\le j\le 8$, and
\begin{eqnarray*}
\mu(x_1, x_2) &=& a_{1,4} x_4 + a_{1,5} x_5 + a_{1,6} x_6 + a_{1,7} x_7 + a_{1,8} x_8 + a_{1,9} x_9 \\
\mu(x_1, x_3) &=& a_{1,4} x_5 + a_{1,5} x_6 + a_{1,6} x_7 + a_{1,7} x_8 + a_{1,8} x_9\\
\mu(x_1, x_4) &=& ( a_{1,4} - a_{2,6}) x_6 + ( a_{1,5} - a_{2,7}) x_7 + ( a_{1,6} - a_{2,8}) x_8 + (a_{1,7}-a_{2,9}) x_9 \\
\mu(x_1, x_5) &=& ( a_{1,4} - 2 a_{2,6}) x_7 + ( a_{1,5} - 2 a_{2,7}) x_8 + (a_{1,6}-2a_{2,8}) x_9 \\
\mu(x_1, x_6) &=& ( a_{1,4} - 3 a_{2,6} + a_{3,8} ) x_8 + (a_{1,5} -3a_{2,7}+a_{3,9}) x_9 \\
\mu(x_1, x_7) &=& ( -4 a_{2,6} + 3 a_{3,8} + 2 a_{4,9} ) x_9 \\
\mu(x_1, x_8) &=& -a_{4,9} x_9 \\
\mu(x_2, x_3) &=& a_{2,6} x_6 + a_{2,7} x_7 + a_{2,8} x_8 + a_{2,9} x_9 \\
\mu(x_2, x_4) &=& a_{2,6} x_7 + a_{2,7} x_8 + a_{2,8} x_9 \\
\mu(x_2, x_5) &=& ( a_{2,6} - a_{3,8} ) x_8 + (a_{2,7}-a_{3,9}) x_9 \\
\mu(x_2, x_6) &=& ( 3 a_{2,6} - 2 a_{3,8} ) x_9 \\ 
\mu(x_2, x_7) &=& a_{4,9} x_9 \\
\mu(x_3, x_4) &=& a_{3,8} x_8 + a_{3,9} x_9 \\
\mu(x_3, x_5) &=& a_{3,8} x_9 \\
\mu(x_3, x_6) &=& -a_{4,9} x_9 \\
\mu(x_4, x_5) &=& a_{4,9} x_9.
\end{eqnarray*}

Let $\mu_t$ be the linear deformation of $\mu$ defined in Section \ref{sec:deformations}
associated to $D^3$, that is (see Remark \ref{rmk:phiD})
\[ \mu_t=\mu + t\psi_{1,7}. \]

To prove that $\mu_t$ is a non trivial deformation of $\mu$ 
we proceed as we did it in Section \ref{subsec:isos}.
So let $g$ be an isomorphism from $\mu_t$ to $\mu$ and let $[g]$
be its matrix with respect to a standard basis $\{x_0,x_1,\dots,x_9\}$.
Then $[g]$ is as in Proposition \ref{prop:g}.

\begin{proposition}\label{prop:dim10}
 Let $\mu\in \F^{10}$ be a given filiform Lie bracket of dimension 10
 and let $U^{10}$ be the following Zariski open set of $\F^{10}$,
 \begin{eqnarray*}
 U^{10} &=& U \cap U' \\
   &=& \{ a_{1,4}\ne 0, a_{1,5}\ne 0, 3a_{2,6}a_{1,5}(a_{1,4}-a_{2,6})\ne 2a_{2,7}a_{1,4}^2 \} \\
    && \quad \cap \{a_{2,6}\ne 0, a_{3,8}\ne 0, a_{1,4}^2+a_{2,7}a_{4,9}\ne 0, 15a_{1,4}^2-a_{2,7}a_{4,9}\ne 0  \}.
 \end{eqnarray*}
 If $\mu\in U^{10}$, then the filiform Lie bracket $\mu_t=\mu+t\psi_{1,7}$ is not isomorphic to $\mu$, except for $t=0$. 
\end{proposition}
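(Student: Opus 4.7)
The plan mirrors the proof of Proposition \ref{prop:dim9}, with adjustments to accommodate the three defining relations of $\F^{10}$ in \eqref{eqn:dim10} and the three irreducible components $C^{10}_1, C^{10}_2, C^{10}_3$; the open set $U^{10}$ is crafted so that the same linear-algebra reduction succeeds uniformly on $U^{10}\cap\F^{10}$. First, because $U^{10}\subseteq U$, Proposition \ref{prop:g} applies: $[g]$ has $m_{1,2}=m_{2,1}=0$ and unit diagonal (the last diagonal entry, which required care for even $n$ in Remark \ref{rmk:mnn}, is again $1$ thanks to $m_{2,1}=0$), and $m_{4,3}=\cdots=m_{9,8}=a:=m_{3,2}$.

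As in dimension $9$, one next solves the family $E_{0,j}^{j+3}=0$ for $j=1,\dots,6$ to obtain the second-subdiagonal entries $m_{5,3}, m_{6,4}, \dots, m_{10,8}$ as explicit linear expressions in $m_{3,1}$, $m_{4,2}$, $a$ and the structure constants $a_{r,s}$. This step is purely mechanical, each equation being linear in one new unknown.

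The main technical step is then to produce a linear combination
\[
E = \sum_{\alpha} \lambda_\alpha(a_{r,s})\, E_{i_\alpha,j_\alpha}^{9}
\]
of coefficient-of-$x_9$ equations, with polynomial coefficients $\lambda_\alpha$, such that substituting the expressions above and invoking all three defining equations of $\F^{10}$ collapses $E$ into $(\text{nonzero polynomial on } U^{10})\cdot(m_{3,2}^2 - 2 m_{4,2})$. The specific nonvanishing conditions carved out by $U^{10}$, namely that $a_{2,6}$, $a_{3,8}$, $a_{1,4}^2+a_{2,7}a_{4,9}$ and $15a_{1,4}^2-a_{2,7}a_{4,9}$ are nonzero, are precisely the factors I expect to appear in this remainder; the relation $a_{4,9}(2a_{1,4}-a_{2,6}-a_{3,8})=0$ must in particular be invoked to handle the contribution of $a_{4,9}$ and so unify the argument across the three components. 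This is where the real work lies, and it is the main obstacle: identifying the correct combination and verifying the claimed collapse modulo \eqref{eqn:dim10}. The conclusion is $m_{4,2} = \tfrac{1}{2}a^2$.

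Finally, because $\mu_t(x_1,x_2) = \mu(x_1,x_2) + t\,x_7$, the parameter $t$ enters $E_{1,2}^{7}=0$ only through $g(t\,x_7)=t\,x_7+\cdots$, contributing $t$ with coefficient $1$. The $t$-independent part should simplify, up to a scalar multiple of $a_{2,6}$, to a multiple of $m_{3,2}^2 - 2 m_{4,2}$, exactly as in the dimension $9$ argument, and therefore vanishes by the previous step. Hence $E_{1,2}^{7}=0$ forces $t=0$, completing the proof on $U^{10}\cap\F^{10}$.
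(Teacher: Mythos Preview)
Your outline has the right overall shape, but the final step is wrong and this is a genuine gap. In dimension $10$ the deformation is by $t\psi_{1,7}$, so $t$ first appears in $E_{1,2}^{7}$; however the $t$-independent part of $E_{1,2}^{7}$ is \emph{not} a multiple of $m_{3,2}^2-2m_{4,2}$ as you claim. It involves the third-subdiagonal entry $m_{5,2}$: after one has $m_{4,2}=\tfrac12 m_{3,2}^2$ (and $m_{3,1}=m_{4,1}=0$), the paper finds
\[
E_{1,2}^{7}=t+3a_{2,6}m_{5,2}-\tfrac12 a_{2,6}m_{3,2}^2,
\]
so one still needs the relation $m_{5,2}=\tfrac16 m_{3,2}^2$ (equivalently $m_{3,2}^2-6m_{5,2}=0$) before $E_{1,2}^7=0$ forces $t=0$. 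Obtaining that relation is the actual crux, and it is here---not in establishing $m_{4,2}=\tfrac12 m_{3,2}^2$---that the extra open conditions $a_{1,4}^2+a_{2,7}a_{4,9}\ne0$ and $15a_{1,4}^2-a_{2,7}a_{4,9}\ne0$ are used.

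Two further corrections. First, the relation $m_{4,2}=\tfrac12 m_{3,2}^2$ comes for free from the single equation $E_{1,2}^{6}=-a_{2,6}(m_{3,2}^2-2m_{4,2})$ together with $a_{2,6}\ne0$; your proposed machinery of combining $x_9$-coefficient equations and invoking all three defining relations of $\F^{10}$ is unnecessary for this step. Second, your hope of a unified argument across components does not match how the proof actually runs: the paper treats $C^{10}_1$, $C^{10}_2$, $C^{10}_3$ separately, substituting the component-specific relations ($a_{4,9}=0$; $a_{2,6}=a_{3,8}=a_{1,4}$; $3a_{2,6}+a_{3,8}=0$, $3a_{1,4}=a_{3,8}$), first eliminating $m_{3,1}$ and $m_{4,1}$, and then extracting $m_{3,2}^2-6m_{5,2}=0$ from a suitable combination (e.g.\ $E_{1,2}^7-E_{1,4}^9$ in $C^{10}_1$, and combinations involving $E_{2,3}^9$ and $E_{1,2}^8$ in $C^{10}_2$, $C^{10}_3$, where the two quadratic nonvanishing conditions appear as leading factors).
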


\begin{proof}
 We show first that 
 \[ m_{4,2}=\frac{1}{2}m_{3,2}^2. \]
 For this, let us consider 
 \begin{eqnarray*}
  E_{0,1}^4 &=& a_{1,4}m_{3,1}-m_{4,2}+m_{5,3} \\
  E_{0,2}^5 &=& a_{1,4}m_{3,1}-m_{4,2}+m_{6,4} \\
  E_{0,3}^6 &=& a_{1,4}m_{3,1}-a_{2,6}m_{3,1}-m_{4,2}+m_{7,5};
 \end{eqnarray*}
from the corresponding equations $E_{*,*}^*=0$ we get that
 \begin{eqnarray*}
  m_{5,3} &=& -a_{1,4}m_{3,1}+m_{4,2} \\
  m_{6,4} &=& -a_{1,4}m_{3,1}+m_{4,2} \\
  m_{7,5} &=& -a_{1,4}m_{3,1}+a_{2,6}m_{3,1}+m_{4,2}
 \end{eqnarray*}
Now, $E_{1,2}^6=-a_{2,6}m_{3,2}^2+2a_{2,6}m_{4,2}$ and hence from 
the equation $E_{1,2}^6=0$ and since $a_{2,6}\ne 0$, it follows what we claimed.

From now on we find it convenient to work in each of the irreducible components $C^{10}_1$, $C^{10}_2$ and $C^{10}_3$ of $\F^{10}$ separately
(see \ref{subsec:F}).

\

\noindent {\sc (i)} Assume $\mu\in C^{10}_1$. In particular $a_{4,9}=0$.

We have that $E_{0,7}^9=m_{10,9}-m_{3,2}$, and therefore it is (also) $m_{10,9}=m_{3,2}$.
And by considering the equations
\[ E_{0,1}^4=0,\quad E_{0,2}^5=0,\quad E_{0,3}^6=0,\quad E_{0,4}^7=0,\text{ and } E_{0,5}^8=0 \]
we get
\begin{eqnarray*}
 m_{5,3} &=& \frac{1}{2}m_{3,2}^2-a_{1,4}m_{3,1} \\
 m_{6,4} &=& \frac{1}{2}m_{3,2}^2-a_{1,4}m_{3,1} \\
 m_{7,5} &=& \frac{1}{2}m_{3,2}^2+(a_{2,6}-a_{1,4})m_{3,1} \\
 m_{8,6} &=& \frac{1}{2}m_{3,2}^2+(2a_{2,6}-a_{1,4})m_{3,1} \\
 m_{9,7} &=& \frac{1}{2}m_{3,2}^2+(3a_{2,6}-a_{1,4}-a_{3,8})m_{3,1}
\end{eqnarray*}
Then we have that  
\[ E_{2,3}^8+(-3a_{2,6}^2+a_{2,6}a_{3,8}+a_{1,4}a_{3,8})m_{3,1}=-a_{3,8}a_{1,4}m_{3,1}, \]
and since $a_{3,8}a_{1,4}\ne 0$ in $U^{10}$, it follows that
\[ m_{3,1}=0. \]

Now, from the equations
\[ E_{0,1}^5=0,\quad E_{0,2}^6=0,\quad E_{0,3}^7=0 \text{ and } E_{0,4}^8=0 \]
we get that
\begin{eqnarray*}
 m_{6,3} &=& m_{5,2}-a_{1,4}m_{4,1} \\
 m_{7,4} &=& m_{5,2}-(a_{1,4}+a_{2,6})m_{4,1} \\
 m_{8,5} &=& m_{5,2}-(a_{1,4}+a_{2,6})m_{4,1} \\
 m_{9,6} &=& m_{5,2}+(a_{3,8}-a_{1,4}-a_{2,6})m_{4,1}
\end{eqnarray*}
We then have that
\[ 0=E_{1,2}^7-E_{1,3}^8+(-3a_{2,6}^2+a_{2,6}a_{3,8}+a_{1,4}a_{3,8})m_{4,1}=-a_{1,4}a_{3,8}m_{4,1}, \]
and since $a_{3,8}a_{1,4}\ne 0$ in $U^{10}$, it follows that
\[ m_{4,1}=0. \]
Given this, we consider the equations
\[ E_{0,5}^9=0 \text{ \ and \ } E_{0,6}^9=0  \]
from which we get that
\[ m_{10,7}=m_{5,2} \text{ \ and \ } m_{10,8}=\frac{1}{2}m_{3,2}^2. \]
Finally,
\[ E_{1,2}^7=t+3a_{2,6}m_{5,2}-\frac{1}{2}a_{2,6}m_{3,2}^2, \]
and
\[ E_{1,4}^9=t+3a_{2,6}m_{5,2}-\frac{1}{2}a_{2,6}m_{3,2}^2-3a_{3,8}m_{5,2}+\frac{1}{2}a_{3,8}m_{3,2}^2. \]
Hence
\[ E_{1,2}^7-E_{1,4}^9=3a_{3,8}m_{5,2}-\frac{1}{2}a_{3,8}m_{3,2}^2, \]
and since $a_{3,8}\ne 0$ in $U^{10}$, it follows that
\[ m_{5,2}=\frac{1}{6}m_{3,2}^2 \]
and therefore
\[ t=E_{1,2}^7=0. \]

\noindent {\sc (ii)} Assume $\mu\in C_2$. In particular $a_{2,6}=a_{3,8}=a_{1,4}$.

From the equations
\[ E_{0,1}^4=0,\quad E_{0,2}^5=0,\quad E_{0,3}^6=0,\quad E_{0,4}^7=0,\text{ \ and \ } E_{0,5}^8=0 \]
we get
\begin{eqnarray*}
 m_{5,3} &=& \frac{1}{2}m_{3,2}^2-a_{1,4}m_{3,1} \\
 m_{6,4} &=& \frac{1}{2}m_{3,2}^2-a_{1,4}m_{3,1} \\
 m_{7,5} &=& \frac{1}{2}m_{3,2}^2 \\
 m_{8,6} &=& \frac{1}{2}m_{3,2}^2+a_{1,4}m_{3,1} \\
 m_{9,7} &=& \frac{1}{2}m_{3,2}^2+a_{1,4}m_{3,1}
\end{eqnarray*}
Then we have that  
\[ 0=E_{2,3}^8+(-3a_{2,6}^2+a_{2,6}a_{3,8}+a_{1,4}a_{3,8})m_{3,1}=-a_{1,4}^2m_{3,1}, \]
and since $a_{1,4}\ne 0$ in $U^{10}$, it follows that
\[ m_{3,1}=0. \]

By considering the equations
\[ E_{0,1}^5=0,\quad E_{0,2}^6=0,\quad E_{0,3}^7=0 \text{ \ and \ } E_{0,4}^8=0 \]
we get that
\begin{eqnarray*}
 m_{6,3} &=& m_{5,2}-a_{1,4}m_{4,1} \\
 m_{7,4} &=& m_{5,2}-2a_{1,4}m_{4,1} \\
 m_{8,5} &=& m_{5,2}-2a_{1,4}m_{4,1} \\
 m_{9,6} &=& m_{5,2}-a_{1,4}m_{4,1}
\end{eqnarray*}
Now we have that
\[ 0=E_{1,2}^7-E_{1,3}^8+(-3a_{2,6}^2+a_{2,6}a_{3,8}+a_{1,4}a_{3,8})m_{4,1}=-a_{1,4}^2m_{4,1}, \]
and since $a_{1,4}\ne 0$ in $U^{10}$, it follows that
\[ m_{4,1}=0. \]

Given this, we continue considering the equations
\[ E_{0,5}^9=0, \quad E_{0,6}^9=0 \text{ and } E_{0,7}^9=0 \]
from which we get that
\begin{eqnarray*}
  m_{10,7} &=& m_{5,2} + a_{4,9}m_{5,1} \\
  m_{10,8} &=& \frac{1}{2}m_{3,2}^2 \\
  m_{10,9} &=& m_{3,2}.
\end{eqnarray*}
And considering the equations
\[ E_{0,1}^6=0, \quad E_{0,2}^7=0 \text{ and } E_{0,3}^8=0 \]
we get 
\begin{eqnarray*}
 m_{7,3} &=& m_{6,2} \\
 m_{8,4} &=& m_{6,2} -a_{1,4}m_{5,1} \\
 m_{9,5} &=& m_{6,2}-2a_{1,4}m_{5,1}.
\end{eqnarray*}
Finally,
\[ E_{1,4}^9+E_{2,3}^9=t-\frac{1}{2}a_{1,4}(m_{3,2}^2-6m_{5,2}), \]
and
\[ 0 = a_{1,4}E_{2,3}^9+a_{4,9}E_{1,2}^8 = -\frac{1}{2}(m_{3,2}^2-6m_{5,2})(a_{1,4}^2+a_{2,7}a_{4,9}). \]
Hence, since $a_{1,4}^2+a_{2,7}a_{4,9}\ne 0$ in $U^{10}$, it follows that $m_{3,2}^2-6m_{5,2}=0$ and $t=0$.

(3) Assume $\mu\in C_3$.

We have that $a_{2,6}=-a_{1,4}$ and $a_{3,8}=3a_{1,4}$.

By considering the equations
\[ E_{0,1}^4=0,\quad E_{0,2}^5=0,\quad  E_{0,3}^6=0,\quad E_{0,4}^7=0,\text{ and } E_{0,5}^8=0 \]
we get
\begin{eqnarray*}
 m_{5,3} &=& \frac{1}{2}m_{3,2}^2-a_{1,4}m_{3,1} \\
 m_{6,4} &=& \frac{1}{2}m_{3,2}^2-a_{1,4}m_{3,1} \\
 m_{7,5} &=& \frac{1}{2}m_{3,2}^2-2a_{1,4}m_{3,1} \\
 m_{8,6} &=& \frac{1}{2}m_{3,2}^2-3a_{1,4}m_{3,1} \\
 m_{9,7} &=& \frac{1}{2}m_{3,2}^2-7a_{1,4}m_{3,1}
\end{eqnarray*}
Then we have that  
\[ 0=E_{2,3}^8+(-3a_{2,6}^2+a_{2,6}a_{3,8}+a_{1,4}a_{3,8})m_{3,1}=-3a_{1,4}^2m_{3,1}, \]
and since $a_{1,4}\ne 0$ in $U^{10}$, it follows that
\[ m_{3,1}=0. \]

By considering the equations
\[ E_{0,1}^5=0,\quad E_{0,2}^6=0,\quad E_{0,3}^7=0 \text{ and } E_{0,4}^8=0 \]
we get that
\begin{eqnarray*}
 m_{6,3} &=& m_{5,2}-a_{1,4}m_{4,1} \\
 m_{7,4} &=& m_{5,2} \\
 m_{8,5} &=& m_{5,2} \\
 m_{9,6} &=& m_{5,2}+3a_{1,4}m_{4,1}
\end{eqnarray*}
Now we have that
\[ 0=E_{1,2}^7-E_{1,3}^8+(-3a_{2,6}^2+a_{2,6}a_{3,8}+a_{1,4}a_{3,8})m_{4,1}=-3a_{1,4}^2m_{4,1}, \]
and since $a_{1,4}\ne 0$ in $U^{10}$, it follows that
\[ m_{4,1}=0. \]

Given this, we continue considering the equations
\[ E_{0,5}^9=0, \quad E_{0,6}^9=0 \text{ and } E_{0,7}^9=0 \]
from which we get that
\begin{eqnarray*}
  m_{10,7} &=& m_{5,2} + a_{4,9}m_{5,1} \\
  m_{10,8} &=& \frac{1}{2}m_{3,2}^2 \\
  m_{10,9} &=& m_{3,2}.
\end{eqnarray*}
And considering the equations
\[ E_{0,1}^6=0, \quad E_{0,2}^7=0 \text{ and } E_{0,3}^8=0 \]
we get 
\begin{eqnarray*}
 m_{7,3} &=& m_{6,2}-2a_{1,4}m_{5,1} \\
 m_{8,4} &=& m_{6,2} -a_{1,4}m_{5,1} \\
 m_{9,5} &=& m_{6,2}-4a_{1,4}m_{5,1}.
\end{eqnarray*}
Finally,
\[ E_{2,1}^7=t-\frac{1}{2}a_{1,4}(m_{3,2}^2-6a_{1,4}m_{5,2}), \]
and
\[ 0 = 5a_{1,4}E_{2,3}^9-a_{4,9}E_{1,2}^8 = -\frac{1}{2}(m_{3,2}^2-6m_{5,2})(a_{1,4}^2+a_{2,7}a_{4,9}). \]
Hence, since $15a_{1,4}^2-a_{2,7}a_{4,9}\ne 0$ in $U^{10}$, it follows that $m_{3,2}^2-6m_{5,2}=0$ and $t=0$.

\end{proof}

\subsection{Dimension 11}

Let $\mu$ be a given filiform Lie bracket of dimension 11 and let 
$\{x_0, x_1, x_2, x_3, x_4, x_5, x_6, x_7, x_8, x_9, x_{10} \}$ be a adapted basis for it.
Then, there are $a_{r,s}\in\C$, with $(r,s)\in\Delta_{11}$, such that: 
\begin{eqnarray*}
\mu &=& \mu_0 + a_{1,4} \psi_{1,4} + a_{1,5} \psi_{1,5} + a_{1,6} \psi_{1,6}+a_{1,7}\psi_{1,7}+a_{1,8}\psi_{1,8}+a_{1,9}\psi_{1,9}+a_{1,10}\psi_{1,10} \\
     &&  \phantom{\mu_0 + a_{1,4} \psi_{1,4} + a_{1,5} \psi_{1,5} }  
          + a_{2,6} \psi_{2,6} + a_{2,7} \psi_{2,7} + a_{2,8} \psi_{2,8} + a_{2,9}\psi_{2,9} +a_{2,10}\psi_{2,10} \\
     &&  \phantom{\mu_0 + a_{1,4} \psi_{1,4} + a_{1,5} \psi_{1,5} + a_{1,6} \psi_{1,6} + a_{1,7} \psi_{1,7}}
          + a_{3,8} \psi_{3,8} + a_{3,9}\psi_{3,9} +a_{3,10}\psi_{3,10} \\
     &&  \phantom{\mu_0 + a_{1,4} \psi_{1,4} + a_{1,5} \psi_{1,5} + a_{1,6} \psi_{1,6}+a_{1,7}\psi_{1,7}+a_{1,8}\psi_{1,8}+a_{1,9}\psi_{1,9}}
          +a_{4,10}\psi_{4,10}
\end{eqnarray*}
So that $\mu(x_0, x_j) = x_{j+1}$, for $1\le j\le 10$, and

\begin{eqnarray*}
\mu(x_1, x_2) &=& a_{1,4} x_4 + a_{1,5} x_5 + a_{1,6} x_6 + a_{1,7} x_7 + a_{1,8} x_8 + a_{1,9} x_9 + a_{1,10} x_{10}\\
\mu(x_1, x_3) &=& a_{1,4} x_5 + a_{1,5} x_6 + a_{1,6} x_7 + a_{1,7} x_8 + a_{1,8} x_9 + a_{1,9} x_{10}\\
\mu(x_1, x_4) &=& ( a_{1,4} - a_{2,6}) x_6 + ( a{1,5} - a_{2,7}) x_7 + ( a_{1,6} - a_{2,8}) x_8 + (a_{1,7} - a_{2,9}) x_9 \\
&+& ( a_{1,8} - a_{2,10}) x_{10}\\
\mu(x_1, x_5) &=& (a_{1,4} - 2 a_{2,6}) x_7 + ( a_{1,5} - 2 a_{2,7}) x_8 + ( a_{1,6} - 2 a_{2,8}) x_9 + ( a_{1,7} - 2 a_{2,9}) x_{10}\\
\mu(x_1, x_6) &=& ( a_{1,4} - 3 a_{2,6} + a_{3,8} ) x_8 + ( a_{1,5} - 3 a_{2,7} + a_{3,9} ) x_9 + ( a_{1,6} - 3 a_{2,8} + a_{3,10} ) x_{10} \\
\mu(x_1, x_7) &=& ( a_{1,4} - 4 a_{2,6} + 3 a_{3,8} ) x_9 + ( a_{1,5} - 4 a_{2,7} + 3 a_{3,9} ) x_{10} \\
\mu(x_1, x_8) &=& ( a_{1,4} - 5 a_{2,6} + 6 a_{3,8} - a_{4,10} ) x_{10} \\
\mu(x_2, x_3) &=& a_{2,6} x_6 + a_{2,7} x_7 + a_{2,8} x_8 + a_{2,9} x_9 + a_{2,10} x_{10} \\
\mu(x_2, x_4) &=& a_{2,6} x_7 + a_{2,7} x_8 + a_{2,8} x_9 + a_{2,9} x_{10}\\
\mu(x_2, x_5) &=& ( a_{2,6} - a_{3,8} ) x_8 + ( a_{2,7} - a_{3,9} ) x_9 + ( a_{2,8} - a_{3,10} ) x_{10}\\
\mu(x_2, x_6) &=& ( a_{2,6} - 2 a_{3,8} ) x_9 + ( a_{2,7} - 2 a_{3,9} ) x_{10}\\
\mu(x_2, x_7) &=& ( a_{2,6} - 2 a_{3,8} + a_{4,10}) x_{10}\\
\mu(x_3, x_4) &=& a_{3,8} x_8 + a_{3,9} x_9 + a_{3,10} x_{10}\\
\mu(x_3, x_5) &=& a_{3,8} x_9 + a_{3,9} x_{10},\\
\mu(x_3, x_6) &=& (a_{3,8} - a_{4,10}) x_{10}\\
\mu(x_4, x_5) &=& a_{4,10} x_{10}
\end{eqnarray*}

Let $\mu_t$ be the linear deformation of $\mu$ defined in Section \ref{sec:deformations}
associated to $D^4$, that is (see Remark \ref{rmk:phiD})
\[ \mu_t=\mu + t\psi_{1,7}. \]

To prove that $\mu_t$ is a non trivial deformation of $\mu$ 
we proceed as we did it in Section \ref{subsec:isos}.
So let $g$ be an isomorphism from $\mu_t$ to $\mu$ and let $[g]$
be its matrix with respect to a standard basis $\{x_0,x_1,\dots,x_{10}\}$.
Then $[g]$ is as in Proposition \ref{prop:g}.

\begin{proposition}\label{prop:dim11}
 Let $\mu\in \F^{11}$ be a given filiform Lie bracket of dimension 11
 and let $U^{11}$ be the following Zariski open set of $\F^{11}$,
 \begin{eqnarray*}
 U^{11} &=& U \cap U' \\
   &=& \{ a_{1,4}\ne 0, a_{1,5}\ne 0, 3a_{2,6}a_{1,5}(a_{1,4}-a_{2,6})\ne 2a_{2,7}a_{1,4}^2 \} \\
    && \quad \cap \{a_{2,6}\ne 0, a_{3,8}\ne 0  \}.
 \end{eqnarray*}
 If $\mu\in U^{11}$, then the filiform Lie bracket $\mu_t=\mu+t\psi_{1,7}$ is not isomorphic to $\mu$, except for $t=0$. 
\end{proposition}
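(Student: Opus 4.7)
The plan is to mimic the proofs of Propositions~\ref{prop:dim9} and~\ref{prop:dim10}, working with the matrix $[g]$ in the form given by Proposition~\ref{prop:g} (which applies since $U^{11}\subset U$), and then peeling off the below-diagonal entries of $[g]$ one diagonal at a time until the coefficient of $t$ in a suitable equation $E_{i,j}^k=0$ forces $t=0$. Since $\F^{11}$ has two irreducible components $C^{11}_1$ and $C^{11}_2$, the argument will have to be carried out separately in each, as was done in dimension $10$.

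The first step is identical to the dimension~$10$ case: the equation $E_{1,2}^6=-a_{2,6}m_{3,2}^2+2a_{2,6}m_{4,2}=0$ together with $a_{2,6}\ne 0$ in $U^{11}$ yields $m_{4,2}=\tfrac12 m_{3,2}^2$. Next, the chain $E_{0,j}^{j+3}=0$ for $j=1,\dots,7$ can be solved successively to express the entries $m_{i+2,i}$ (for $i=3,\dots,8$) as explicit linear functions of $m_{3,1}$ (with coefficients built from the $a_{r,s}$) plus the term $\tfrac12 m_{3,2}^2$. Then the expression $E_{2,3}^8$, augmented by a multiple of $m_{3,1}$ coming from the defining relation $-3a_{2,6}^2+a_{2,6}a_{3,8}+2a_{1,4}a_{3,8}=0$ of $\F^{11}$, should collapse to a multiple of $a_{1,4}a_{3,8}\,m_{3,1}$; since $a_{1,4}a_{3,8}\ne 0$ in $U^{11}$, this forces $m_{3,1}=0$.

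Exactly the same strategy is then iterated one subdiagonal lower: use $E_{0,j}^{j+4}=0$ for $j=1,\dots,6$ to express the $m_{i+3,i}$ in terms of $m_{5,2}$ and $m_{4,1}$, and then combine $E_{1,2}^7$ and $E_{1,3}^8$ (plus a multiple of the first defining equation) to force $m_{4,1}=0$. In dimension $11$ there is one more diagonal to kill than in dimension $10$: one has to apply the analogous argument with $E_{0,j}^{j+5}=0$ and then combine $E_{1,2}^8$ and $E_{1,3}^9$ (or $E_{1,4}^9$) with the appropriate defining equation(s) to conclude that $m_{5,1}=0$ as well. This is the step where the two components part ways, since in $C^{11}_1$ one has the linear relation $a_{1,4}=a_{2,7}$ available, while in $C^{11}_2$ one must use instead the more complicated relation involving $a_{2,7}^2$ and $a_{2,9}$. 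Once $m_{3,1}=m_{4,1}=m_{5,1}=0$, the remaining entries of column $4,5,\dots$ can be determined up through $m_{11,10}=m_{3,2}$ via further $E_{0,j}^{j+k}=0$ equations, exactly as in the dimension~$10$ proof.

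Finally, the coefficient $t$ enters only through $\varphi_{D^4}=\psi_{1,7}$, hence only through equations $E_{1,j}^k$ with $j\in\{2,3,4,5\}$. The plan is to combine two such equations at the top of the last descending-central level (where the $\psi_{1,7}$ contribution shows up as an additive $t$) so that the $m_{5,2}$ term cancels via a further defining relation of $\F^{11}$, leaving a scalar multiple of $t$ equated to zero; then $t=0$ follows in $U^{11}$. The main obstacle I anticipate is exactly this last bookkeeping step in $C^{11}_2$: the second component is cut out by a much more intricate polynomial (involving $a_{2,7}^2 a_{4,10}$, $a_{2,9}a_{3,9}$, etc.), so finding the right $\C$-linear combination of $E_{i,j}^k$'s and of the defining equations that isolates $t$ without reintroducing the unknowns $m_{6,2}, m_{7,3}, m_{5,1}$ will require a careful ad~hoc choice, and may be why the authors selected $D^4$ rather than $D^3$ in dimension~$11$.
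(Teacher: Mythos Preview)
Your plan is a reasonable extrapolation of the dimension~$10$ argument, but it is considerably more laborious than what the paper actually does, and the paper's route sidesteps precisely the obstacle you yourself flag at the end. In the paper there is \emph{no} component-by-component split, and the entries $m_{3,1}$, $m_{4,1}$, $m_{5,1}$ are \emph{never} shown to vanish. Instead, after expressing the two subdiagonals $m_{5,3},\dots,m_{11,9}$ and $m_{6,3},\dots,m_{10,7}$ via the chains $E_{0,j}^{j+3}=0$ and $E_{0,j}^{j+4}=0$, and after $E_{1,2}^6=0$ gives $m_{4,2}=\tfrac12 m_{3,2}^2$, the paper simply computes
\[
E_{1,2}^7 \;=\; t \;-\; 3a_{2,6}\,Q,
\qquad
E_{2,3}^9 + P_1\,m_{3,1}m_{3,2} + P_3\,m_{3,1} \;=\; -3a_{3,8}\,Q,
\]
where $Q=a_{1,4}m_{4,1}-m_{5,2}-a_{1,4}m_{3,1}m_{3,2}+\tfrac16 m_{3,2}^2$, and $P_1,P_3$ are two of the \emph{global} defining polynomials of $\F^{11}$ (so they vanish on both components). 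Since $a_{3,8}\ne 0$ in $U^{11}$, the second identity forces $Q=0$, and then the first gives $E_{1,2}^7=t$, hence $t=0$.

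The point is that the same combination $Q$ of the unknowns appears in both equations; one never needs to isolate $m_{3,1}$, $m_{4,1}$, or $m_{5,1}$ individually, and only the first and third of the four defining equations of $\F^{11}$ are ever invoked. Your strategy of successively killing $m_{3,1}$, $m_{4,1}$, $m_{5,1}$ might be pushed through, but it would require exactly the delicate bookkeeping in $C^{11}_2$ that you anticipate, whereas the paper's shortcut makes that entire difficulty evaporate. This is also why the choice $D=D^4$ pays off: the $t$ lands in $E_{1,2}^7$, one level lower than with $D^3$, and at that level the matching $E_{2,3}^9$ is available to cancel $Q$ using only the universal relations of $\F^{11}$.
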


\begin{proof}
By considering the equations
\[ E_{0,1}^4=0,\quad E_{0,2}^5=0,\quad E_{0,3}^6=0,\quad E_{0,4}^7=0, \]
\[ E_{0,5}^8=0,\quad E_{0,6}^9=0,\text{ and } E_{0,7}^{10}=0 \]
we get
\begin{eqnarray*}
 m_{5,3} &=& -a_{1,4}m_{3,1}+m_{4,2} \\
 m_{6,4} &=& -a_{1,4}m_{3,1}+m_{4,2} \\
 m_{7,5} &=& (-a_{1,4}+a_{2,6})m_{3,1}+m_{4,2} \\
 m_{8,6} &=& (-a_{1,4}+2a_{2,6})m_{3,1}+m_{4,2} \\
 m_{9,7} &=& (-a_{1,4}+3a_{2,6}-a_{3,8})m_{3,1}+m_{4,2} \\
 m_{10,8} &=& (-a_{1,4}+4a_{2,6}-3a_{3,8})m_{3,1}+m_{4,2} \\
 m_{11,9} &=& (-a_{1,4}+5a_{2,6}-6a_{3,8}+a_{4,10})m_{3,1}+m_{4,2}
 \end{eqnarray*}
 
And by considering the equations
\[ E_{0,1}^5=0,\quad E_{0,2}^6=0,\quad E_{0,3}^7=0,\quad E_{0,4}^8=0,\text{ and } E_{0,5}^{9}=0 \]
we get
\begin{eqnarray*}
 m_{6,3} &=& -a_{1,5}m_{3,1}-a_{1,4}m_{4,1}+m_{5,2} \\
 m_{7,4} &=& -a_{1,5}m_{3,1}-(a_{1,4}+a_{2,6})m_{4,1}+m_{5,2}+a_{2,6}m_{3,1}m_{3,2} \\
 m_{8,5} &=& (-a_{1,5}+a_{2,7})m_{3,1}-(a_{1,4}+a_{2,6})m_{4,1}+m_{5,2}+2a_{2,6}m_{3,1}m_{3,2} \\
 m_{9,6} &=& (-a_{1,5}+2a_{2,7})m_{3,1}-(a_{1,4}+a_{2,6}-a_{3,8})m_{4,1}+m_{5,2}+(3a_{2,6}-a_{3,8})m_{3,1}m_{3,2} \\
 m_{10,7} &=& (-a_{1,5}+3a_{2,7}-a_{3,9})m_{3,1}-(a_{1,4}+a_{2,6}-2a_{3,8})m_{4,1}+m_{5,2}+(4a_{2,6}-3a_{3,8})m_{3,1}m_{3,2} 
 \end{eqnarray*}
  
Then we have that  
\[ E_{1,2}^6=-a_{2,6}(m_{3,2}^2-2m_{4,2}), \]
and since $a_{2,6}\ne 0$ in $U^{11}$, it follows that
\[ m_{4,2}=\frac{1}{2}m_{3,2}^2. \]

Now
\begin{eqnarray*}
 E_{1,2}^7 &=& -3a_{1,4}a_{2,6}m_{4,1}+3a_{2,6}m_{5,2}+3a_{1,4}a_{2,6}m_{3,1}m_{3,2}-\frac{1}{2}a_{2,6}m_{3,2}^2+t \\
           &=& t-3a_{2,6}\big(a_{1,4}m_{4,1}-m_{5,2}-a_{1,4}m_{3,1}m_{3,2}+\frac{1}{6}m_{3,2}^2 \big)
\end{eqnarray*}
and 
\[ E_{2,3}^9+P_1m_{3,1}m_{3,2}+P_3m_{3,1}=
  -3a_{3,8}\big(a_{1,4}m_{4,1}-m_{5,2}-a_{1,4}m_{3,1}m_{3,2}+\frac{1}{6}m_{3,2}^2 \big)
\]
where 
\begin{eqnarray*}
P_1 &=& 2a_{1,4}a_{3,8}-3a_{2,6}^2+a_{2,6}a_{3,8}  \\
P_3 &=& 2a_{1,4}a_{3,9}+3a_{1,5}a_{3,8}-7a_{2,6}a_{2,7}+a_{2,6}a_{3,9}+3a_{2,7}a_{3,8}  
\end{eqnarray*}
are defining equations of $\F^{11}$ (see ).

Therefore, since $a_{3,8}\ne 0$ in $U^{11}$, $E_{1,2}^7=t$ and $t=0$.
\end{proof}

\section{The main results}

In the previous section we proved that all filiform Lie algebras in certain open sets of $\F^9$,
$\F^{10}$ and $\F^{11}$, have a non-trivial deformation.
These open sets are big enough to prove that there are no rigid filiform Lie algebras inside these varieties.

We identify a point in $\F^{n}$ with the tuple of parameters in $\Delta_n$ ordered lexicographically.
Recall that $\Delta_9$ has 9 parameters, $\Delta_{10}$ has 13 and $\Delta_{11}$ has 16.

\begin{theorem}
 There are no rigid filiform Lie algebras of dimension 9, 10 and 11.
 Moreover, in any Euclidean neighborhood of a filiform Lie bracket $\mu\in\F^n$,
 with $n=9,10,11$, there is another non-isomorphic filiform Lie bracket $\nu$.
\end{theorem}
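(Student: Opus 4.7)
The plan is to combine Propositions \ref{prop:dim9}, \ref{prop:dim10} and \ref{prop:dim11} with density of the Zariski open sets $U^n$ in $\F^n$. Rigidity of a filiform $\mu$ in the full variety $\L$ of Lie brackets would give a Euclidean neighborhood of $\mu$ in $\F^n$ consisting of brackets isomorphic to $\mu$ (since $O(\mu)\subseteq\F^n$ for filiform $\mu$), so the first assertion of the theorem is a consequence of the second, and I target the second directly.

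Fix $n\in\{9,10,11\}$, $\mu\in\F^n$, and a Euclidean neighborhood $V\subseteq\F^n$ of $\mu$. If $\mu\in U^n$, the linear deformation $\mu_t=\mu+t\varphi_D$ built in Section \ref{sec:deformations} consists of filiform brackets, since by Remark \ref{rmk:phiD} an adapted basis for $\mu$ is also an adapted basis for $\mu_t$. For $|t|$ small enough, $\mu_t\in V$, and Proposition \ref{prop:dim9}, \ref{prop:dim10} or \ref{prop:dim11} guarantees $\mu_t\not\cong\mu$ for $t\neq 0$, providing the required non-isomorphic filiform $\nu\in V$.

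If instead $\mu\in\F^n\setminus U^n$, I would argue by contradiction: suppose every bracket in $V$ is isomorphic to $\mu$. The key input is that $U^n$ is Zariski dense in $\F^n$, and hence Euclidean dense, so $V\cap U^n\neq\emptyset$. Pick $\mu'\in V\cap U^n$; by the standing assumption $\mu'\cong\mu$. Applying the previous paragraph to $\mu'$ yields $\mu'_t\in V$ with $\mu'_t\not\cong\mu'$ for small $t\neq 0$; hence $\mu'_t\not\cong\mu$, contradicting the assumption on $V$.

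The main obstacle is the density input, namely the claim that $U^n$ meets every irreducible component of $\F^n$. For $n=9$, where $\F^9$ is irreducible, the point $a_{1,4}=a_{2,6}=a_{3,8}=1$ (which solves \eqref{eqn:dim9}) together with $a_{1,5}\neq 0$ and $a_{2,7}\neq 0$ satisfies all defining inequalities of $U^9$, so $U^9$ is non-empty and therefore dense. For $n=10,11$, each irreducible component $C^n_i$ listed in Subsection \ref{subsec:F} has to be inspected individually: one chooses admissible values of the parameters appearing in its defining equations that avoid the vanishing of $a_{1,4}$, $a_{2,6}$, $a_{3,8}$ (and the extra inequalities in $U^{10}$), and then selects the remaining free parameters so that the inequality $3a_{2,6}a_{1,5}(a_{1,4}-a_{2,6})\neq 2a_{2,7}a_{1,4}^2$ holds. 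This reduces to a short, component-by-component computation, after which the proof is complete.
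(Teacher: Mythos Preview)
Your argument is correct and follows the same line as the paper's proof: reduce to density of $U^n$ in $\F^n$, then use the non-trivial deformations from Propositions \ref{prop:dim9}--\ref{prop:dim11}. The only substantive difference is that the paper completes the density step by exhibiting explicit witness points in each irreducible component of $\F^{10}$ and $\F^{11}$, whereas you leave this as an indicated (but not carried out) computation; your explicit point for $\F^9$ is valid.
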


\begin{proof}
Let $U^9\subseteq\F^{9}$, $U^{10}\subseteq\F^{10}$ and $U^{11}\subseteq\F^{11}$ be the open sets
given, respectively, in Propositions \ref{prop:dim9}, \ref{prop:dim10} and \ref{prop:dim11}.

Each one of these is Zariski dense in the corresponding variety, since they intersect non-trivially
each of the irreducible components of the variety, and therefore they are Euclidean dense.
This follows from:
\begin{enumerate}
 \item $\F^{9}$ is irreducible and
 \[ ( 1, -1, 0, 0, 0, 1, 1, 0, 1)\in U^{9}\cap\F^{9}. \]
   
 \item $\F^{10}$ has three irreducible components $C^{10}_1$, $C^{10}_2$ and $C^{10}_3$ (see \S\ref{subsec:F}), and
 \[ ( 1, -16, 0, 0, 0, 0, 4, 0, 0, 0, 8, 64,0) \in U^{10}\cap C^{10}_1 \]
 \[ ( 1, 4, 0, 0, 0, 0, 1, 3, 0, 0, 1, 0,0) \in U^{10}\cap C^{10}_2 \]
 \[ ( 1, -16, 0, 0, 0, 0, -1, 9, 0, 0, 3, 0, 0) \in U^{10}\cap C^{10}_3. \]
 
 \item $\F^{11}$ has two irreducible components $C^{11}_1$ and $C^{11}_2$ (see \S\ref{subsec:F}), and
 \[ (1,\frac{1}{6}, 0, 0, 0, 0, 0, 4, 1, - \frac{5}{12}, 0, 0, 8, 0, 0, \frac{128}{5}) 
      \in U^{11}\cap C^{11}_1 \]
 \[ (1, 1, 0, 0, 0, 0, 0, 4, 0, 0, 0, 0, 8, -6, 3, \frac{128}{5}) \in U^{11}\cap C^{11}_2 \]
\end{enumerate}
Let $n=9,10,11$.
Given $\nu\in\F^n$ and a neighborhood $V$ of $\nu$, take $\mu\in V\cap U^n\ne\emptyset$.
If $\mu\not\simeq\nu$, then $\nu$ is not rigid.
If $\mu\simeq\nu$, then for some $t\ne 0$, $\mu_t\in V$ and $\mu\not\simeq\mu$ so that
$\mu_t\not\simeq\nu$ and therefore $\nu$ is not rigid.
\end{proof}

For the ease of completeness we briefly address the problem in dimensions $\le 8$.
See Remark \ref{rmk:78}.

\subsection{Dimension 8}

Complex filiform Lie algebras of dimension 8 are classified \cite{AG}. 
Based on it, in \cite{GT} it is shown that non of them are rigid in $\L^8$.
This follows by constructing a solvable non-nilpotent deformation of each of them.

However, it remains the question whether they are or not rigid in $\F^8$.
The answer is no, they are not rigid. 

The same construction we used for dimensions 9, 10 and 11, applies successfully in dimension 8.
In this case, for a given $\mu\in\F^8$, choose $D=D^3$ and consider the corresponding deformation
$\mu_t$ as in \eqref{eqn:mut}. 
It turns out that $\mu\simeq\mu_t$ only if $t=0$.
This follows much easier than in the cases we treated,
even though Proposition \ref{prop:g} does not hold for $n=8$.

\subsection{Dimension 7}

The situation in dimension 7 is very similar to that in dimension 8.
On the one hand all of them admit solvable non-nilpotent deformations.
On the other hand, our method (choosing $D=D^3$) produces non-trivial
filiform deformations of all of them.

\vskip2cm

\noindent{\bf Acknowledgements.}
This paper is part of the PhD.\ thesis of the second author. 
She thanks CONICET for the Ph.D.\ fellowship awarded that made this possible.



\begin{thebibliography}{GJK2}

\bibitem[AG]{AG} Ancoche\'ea-Bermudez J.M.\ and Goze M.,
   \emph{Classification des alg\`ebres de Lie filiformes de dimension 8}, Archiv der Mathematik 50, (1988), 511-525.

\bibitem[C]{C} Carles R., 
  \emph{Sur la structure des alg\`ebres de Lie rigides}, Annales de l’institut Fourier, 34(1984), 65-82.
  
\bibitem[GH]{GH} Grunewald F.\ and O'Halloran J.,
  \emph{Deformations of Lie Algebras}, Journal of Algebra 162, (1993), 210-224.

\bibitem[GJK1]{GJK1} G\'omez J.R., Jim\'enez-Merch\'an A.\ and Khakimdjanov Y.,
    \emph{On de variety of nilpotent Lie algebra laws of dimension 11}, Rend. Sem. Fac. Sci. Univ. Cagliari 66, (1996), 137-142.
  
\bibitem[GJK2]{GJK2} G\'omez J.R., Jim\'enez-Merch\'an A.\ and Khakimdjanov Y.,
  \emph{Low-dimensonal filiform Lie algebras}, Journal of Pure and Applied Algebra 130, (1998), 133-158.
  

\bibitem[GT]{GT} Granada-Herrera F.\ and Tirao P.,
  \emph{Filiform Lie algebras of dimension 8 as degenerations}, Journal of algebras and its applications 13, (2014).
  
\bibitem[K]{K} Khakimdjanov Y.,
  \emph{Varieties of Lie Algebras Laws}, Handbook of algebra, Vol.\ 2, 509–541, Elsevier/North-Holland, Amsterdam, 2000. 
  
\bibitem[V]{V} Vergne M.,
  \emph{Cohomologie des alg\`ebres de Lie nilpotentes. Application \`a l'\`etude de la vari\`et\`e des alg\`ebres de Lie nilpotentes}, Bulletin de la S. M. F., 98 (1970), 81-116. 
  
\end{thebibliography}
\end{document}